\def\H{\widetilde{H}}
\def\R{{\mathbb R}}
\def\KK{{\mathcal K}}
\def\MM{{\mathcal M}}
\def\PP{{\mathcal P}}
\def\SS{{\mathcal S}}
\def\SSS{{\mathbb S}}
\def\TT{{\mathcal T}}
\def\NN{\mathcal{N}}
\def\II{{\mathcal I}}
\def\L{\mathbf L}
\def\H{\mathbf H}
\newcommand{\norm}[3][]{#1\|#2#1\|_{#3}}
\def\set#1#2{\big\{#1\,:\,#2\big\}}
\def\Ce{C_{\rm exch}}
\def\Ca{C_{\rm ani}}
\def\eps{\varepsilon}
\newcounter{constantsnumber}
\def\namec#1#2{%
  \ifthenelse{\equal{#1}{rho:reliable}}{C_{\rm rel}}{%
  \ifthenelse{\equal{#1}{rho:efficient}}{C_{\rm eff}}{%
  \ifthenelse{\equal{#1}{stability}}{C_{\rm stab}}{%
  \ifthenelse{\equal{#1}{equivalence1}}{C_{\rm low}}{%
  \ifthenelse{\equal{#1}{equivalence2}}{C_{\rm high}}{%
  \ifthenelse{\equal{#1}{optimal}}{C_{\rm opt}}{%
  \ifthenelse{\equal{#1}{scottzhang}}{C_{\rm sz}}{%
  \ifthenelse{\equal{#1}{dirichlet0}}{C_{\rm dir}}{%
  \ifthenelse{\equal{#1}{dirichlet}}{C_{\rm osc}}{%
  \ifthenelse{\equal{#1}{equivalence}}{C_{\rm eq}}{%
  \ifthenelse{\equal{#1}{pythagoras}}{C_{\rm pyth}}{%
  \ifthenelse{\equal{#1}{dlr}}{C_{\rm dlr}}{%
  \ifthenelse{\equal{#1}{nvb}}{C_{\rm nvb}}{%
  \ifthenelse{\equal{#1}{reduction}}{C_{\rm red}}{%
  \ifthenelse{\equal{#1}{refined}}{C_{\rm ref}}{%
  \ifthenelse{\equal{#1}{estconv}}{C_{\rm est}}{%
  \ifthenelse{\equal{#1}{cea}}{C_{\mbox{\rm\scriptsize C\'ea}}}{%
  \ifthenelse{\equal{#2}{newcounter}}{\refstepcounter{constantsnumber}\label{const#1}}{}C_{\ref{const#1}}}%
}}}}}}}}}}}}}}}}}
\def\setc#1{\namec{#1}{newcounter}}
\def\c#1{\namec{#1}{reference}}
\newcounter{contractionnumber}
\def\nameq#1#2{%
  \ifthenelse{\equal{#1}{reduction}}{q_{\rm red}}{%
  \ifthenelse{\equal{#1}{estconv}}{q_{\rm est}}{%
  \ifthenelse{\equal{#1}{cea}}{q_{\mbox{\scriptsize C\'ea}}}{%
  \ifthenelse{\equal{#2}{newcounter}}{\refstepcounter{contractionnumber}\label{contraction#1}}{}q_{\ref{contraction#1}}}%
}}}
\def\mmm{\mathbf{m}}
\def\heff{\mathbf{H}_{\text{eff}}}
\def\fff{\mathbf{f}}
\def\nnn{\mathbf{n}}
\def\zzz{\mathbf{z}}
\def\xxx{\mathbf{x}}
\def\yyy{\mathbf{y}}
\def\vvv{\mathbf{v}}
\def\uuu{\mathbf{u}}
\def\pphi{\boldsymbol{\phi}}
\def\hm{\mathbf{h}_\mathbf{m}}
\def\ssigma{\boldsymbol{\sigma}}
\def\eeps{\boldsymbol{\eps}}
\def\llambda{\boldsymbol{\lambda}}
\def\vvvv{\dot \uuu}
\def\TTTT{\mathfrak{T}}
\def\ppsi{\boldsymbol{\psi}}
\def\vphi{\boldsymbol{\varphi}}
\def\zzeta{\boldsymbol{\zeta}}
\def\wgamma{\gamma_{hk}}
\def\wvvvv{\vvvv_{hk}}
\def\operator{\boldsymbol\pi}
\def\dt{\text{d}_t}
\def\bf{\boldsymbol}
\def\Cpi{C_{\operator}}
\def\weakto{\rightharpoonup}
\newtheorem{theorem}{Theorem}
\newtheorem{proposition}[theorem]{Proposition}
\newtheorem{lemma}[theorem]{Lemma}
\newtheorem{corollary}[theorem]{Corollary}
\newtheorem{algorithm}[theorem]{Algorithm}
\newtheorem{definition}[theorem]{Definition}
\newenvironment{remark}{\medskip\noindent\textbf{Remark.}\ \it}{\qed\smallskip}
\def\subsection#1
\bf\arabic{section}.\arabic{subsection}.~#1.~}
\def\norm#1{\|#1\|}
\def\qed{\ifhmode\unskip\nobreak\fi\ifmmode\ifinner\else\hskip5 pt \fi\fi
 \hbox{\hskip25 pt \hbox{\vrule width .2 pt \vbox{\hrule width 4 pt 
 height .2 pt \vskip 6.2 pt \hrule width 4 pt  height .2 pt }\unskip\vrule
 width .2 pt }\hskip 0pt }}
\title{On the Landau-Lifshitz-Gilbert equation with magnetostriction}
\author{L'.~Ba\v nas\thanks{Department of Mathematics,
Heriot-Watt University, Edinburgh,
United Kingdom, {\it E-mail address:} {\tt L.Banas@hw.ac.uk}.}
        \and M.~Page\thanks{Institute for Analysis and Scientific Computing,
       Vienna University of Technology,
       Wiedner Hauptstra\ss{}e 8-10,
       A-1040 Wien, Austria, {\it E-mail address:} {\tt Marcus.Page@tuwien.ac.at} (corresponding author), {\tt Dirk.Praetorius@tuwien.ac.at}.}
       \and D.~Praetorius$^{\dagger}$
       \and J.~Rochat\thanks{MATHICSE, \'{E}cole Polytechnique F\'{e}d\'{e}rale de Lausanne, station 8, CH-1015 Lausanne, Switzerlandk {\it E-mail address:} {\tt jonathan.rochat@epfl.ch}.}}
\begin{document}

\maketitle

\begin{abstract}
To describe and simulate dynamic micromagnetic phenomena, we
consider a coupled system of the nonlinear Landau-Lifshitz-Gilbert equation
and the conservation of momentum equation. This coupling allows to 
include magnetostrictive effects into the simulations.
Existence of weak solutions has recently been shown in~\cite{carbou}.
In our contribution, we give an alternate proof which additionally provides
an effective numerical integrator. The latter is based on lowest-order
finite elements in space and a linear-implicit Euler time-stepping.
Despite the nonlinearity, only two linear
systems have to be solved per timestep, and the integrator fully decouples
both equations.
Finally, we prove unconditional convergence---at least of a 
subsequence---towards, and hence existence of,
a weak solution of the coupled system,
as timestep size and spatial mesh-size tend to zero. Numerical experiments conclude
the work and shed new light on the existence of blow-up in micromagnetic simulations.
\end{abstract}

\begin{keywords} 
LLG, magnetostriction, ferromagnetism, unconditionally convergent linear integrator
\end{keywords}

\begin{AMS}
65N30, 65N50
\end{AMS}

\pagestyle{myheadings}
\thispagestyle{plain}
\markboth{L'.~BA\v NAS, M.~PAGE, D.~PRAETORIUS, AND J.~ROCHAT}{CONVERGENT SCHEME FOR LLG WITH MAGNETOSTRICTION}

\section{Introduction}\label{sec:intro}
\noindent
Throughout all technical areas, magnetic devices like sensors, recording heads,
and magneto-resistive storage devices are quite popular and thus widely used. 
As their size decreases to a microscale, and the testing and development 
becomes more and more involved, the need for reliable and stable 
simulation tools as well as for a thorough theoretical understanding rises.
In terms of mathematical physics,
micromagnetic phenomena are modeled best by the Landau-Lifshitz-Gilbert 
equation (LLG), see~\eqref{eq:llg_total} below. This nonlinear partial 
differential equation describes the behaviour of the magnetization of some 
ferromagnetic body under the influence of a so-called effective field. 
The mathematical challenges as well as its applicability to a wide range of 
real world problems makes LLG an interesting problem for mathematicians and 
physicists, but also for scientists from related fields like engineers and 
developers from high-tech industry.

In our contribution, we present and analyze a computationally attractive integrator
to solve LLG numerically. Additionally, our analysis provides a constructive
existence proof for weak solutions of the coupled system for LLG with 
magnetostriction and thus particularly includes the results of~\cite{carbou}.
For the pure LLG equation,
existence and non-uniqueness of weak solutions of LLG goes back 
to~\cite{as, visintin} for a simplified effective field. 
For a review of the analysis of LLG, we refer 
to~\cite{cimrak, gc, mp06} or the monographs~\cite{hubertschaefer, prohl} and 
the references therein. As far as the numerical analysis is concerned, 
mathematically reliable and convergent LLG integrators are found 
in~\cite{alouges2008, alouges2011, banas, maxwell,bjp,bp,multiscale, tran, mathmod2012, gps,rochat}. Of utter interest are unconditionally convergent integrators which do not impose a coupling of spatial mesh-size $h$ and time-step size $k$ to ensure stability of the numerical integrator.
Those integrators are split into two 
groups; first, midpoint-scheme based integrators~\cite{banas,rochat} which 
rely on the seminal work~\cite{bp} of \textsc{Bartels \& Prohl}; second, projection-based first-order 
integrators~\cite{alouges2011, maxwell, multiscale, tran,  mathmod2012,gps}
which build on the work~\cite{alouges2008} of \textsc{Alouges}.
All of the above integrators have in common that they allow for constructive 
existence proofs of the corresponding problems. The idea, which is also 
exploited in the current work, is to show boundedness of the 
\emph{computable} discrete solutions.
Then, a compactness argument concludes the existence of weakly 
convergent subsequences. Finally, those weak limits are identified 
as weak solutions of the coupled system. 

In our contribution, we extend the analysis of the aforementioned 
works for the projection based integrators and show that these ideas can also
be transferred to the coupled system of LLG with magnetostriction. Even though 
the structure of the main proof in this work is similar to the one from 
e.g.~\cite{alouges2008,maxwell, multiscale} (and also the other works 
mentioned), we stress that the individual ingredients are 
much harder to gain than for the coupling with stationary 
equations~\cite{multiscale} 
or with the instationary Maxwell system~\cite{maxwell}. 
First, unlike the Maxwell-LLG system, the coupling to the conservation of 
momentum equation involves a nonlinear coupling operator. Second, the 
field contribution which accounts for magnetostriction, does not only depend 
on the displacement, but also on its spatial derivative.
For these two reasons, the analysis requires new mathematical tools 
and therefore complicates the convergence proof.
In contrast to the existing literature, we thus extend the approach 
from~\cite{alouges2008} and
analyze a nonlinear coupling of LLG to a second time-dependent PDE. The 
contributions of this work as well as the advances over the state of the art 
can be summarized as follows:
\begin{itemize}
\item We include the magnetostrictive field contribution into the numerical 
analysis as well as into the algorithm to account for elastic effects on a 
microscale. This allows to conduct more precise simulations in 
certain applications.
\item We give a new and constructive proof for the existence of 
weak solutions for LLG with magnetostriction by providing a numerical 
integrator which is mathematically guaranteed to converge to a weak solution 
of the coupled system of LLG with magnetostriction.

\item The proposed integrator is unconditionally convergent towards a weak 
solution as soon as timestep-size $k$ and spatial mesh-size $h$ tend to 
zero, independently of each other. For midpoint-scheme based 
integrators~\cite{banas, bp,rochat}, unconditional convergence is 
theoretically proved. However, the solution of the nonlinear system requires 
either a heuristical solver or an appropriate fixed-point iteration. The latter 
is used in~\cite{banas,bp,rochat}, but the resulting explicit scheme again 
involves a coupling of $h$ and $k$ to guarantee convergence and avoid 
instabilities.
\item The proposed algorithm is extremly attractive from a computational point 
of view: Since the two equations are fully decoupled, the implementation is 
easy and only two linear systems have to be solved per timestep. Contrary, 
midpoint-scheme based integrators~\cite{banas, bp,rochat} have to solve 
one large nonlinear system per timestep, and the decoupling has not been
thoroughly analyzed~\cite{banas}.
\item We provide a numerical comparison between the extended Alouges-type
integrator and an integrator based on the midpoint scheme~\cite{rochat}. 
Empirically, the numerical results show that our integrator works with 
larger timesteps than the midpoint scheme. In particular, in our setting
the computations based on the proposed integrator turn out to be much faster.
This gives empirical evidence that the fixed-point iteration for the
midpoint scheme indeed poses a computational bottleneck.
\end{itemize}

\textbf{Outline.}
The remainder of this paper is organized as follows: In Section~\ref{sec:problem}, we state the mathematical model for the coupled system of LLG with magnetostriction and recall the notion of a weak solution (Definition~\ref{def:weak_sol}). In Section~\ref{sec:prelim}, we collect some notation and preliminaries, as well as the definition of the discrete ansatz spaces. In Section~\ref{sec:algo}, we write down our numerical integrator in Algorithm~\ref{alg}, and
Section~\ref{sec:convergence} is devoted to our main convergence result (Theorem~\ref{thm:convergence}) and its proof. Finally, numerical examples concludes the work in Section~\ref{sec:numerics}.

\section{Model Problem}\label{sec:problem}
The evolution of the magnetization of a ferromagnetic body $\Omega$ during some time interval $(0,T)$ is mathematically modeled by the Landau-Lifshitz-Gilbert equation (LLG) which in dimensionless form reads
\begin{subequations}\label{eq:llg_total}
\begin{align}\label{eq:llg}
\mmm_t - \alpha \mmm\times \mmm_t = -\mmm\times \heff \quad \text{ in } \Omega_T := (0,T) \times \Omega
\end{align}
Here, $\mmm: \Omega_T \to \SSS^2 := \set{\xxx \in \R^3}{|\xxx| = 1}$ denotes the sought magnetization, and $\heff$ is the so-called effective field that consists of several energy contributions each of which models a certain micromagnetic effect. More precisely, in our work, the effective field consists of the exchange contribution $\Delta \mmm$, the magnetostrictive component $\hm$, and all other stationary and lower-order effects are collected in some general field contribution $\operator(\mmm)$, i.e.\ 
\begin{align}\label{eq:heff}
\heff = C_e\Delta \mmm + \hm - \operator(\mmm).
\end{align}
The general energy contribution $\operator(\mmm)$, is only assumed to fulfill a certain set of properties, see~\eqref{assumption:bounded}--\eqref{assumption:convergence}, and we emphasize that this particularly includes the case $\heff = C_e\Delta \mmm + \hm + \Ca D\Phi(\mmm) + P(\mmm) - \fff$. Here, $\Phi(\mmm)$ denotes the crystalline anisotropy density and $\fff$ is a given applied field. The contribution $P(\mmm)$ stands for the nonlocal strayfield. The constants $C_e, \Ca >0$ denote the exchange and anisotropy constant, respectively. To keep the presentation simple, we did not include the additional coupling to the full Maxwell equations as in~\cite{maxwell}. We stress, however, that this extension is straightforward and, with the combined techniques from this work and~\cite{maxwell}, one could consider a coupled system of full Maxwell LLG with
the conservation of momentum equation to account for magnetostrictive effects. The combined results from~\cite{maxwell} and the current work would then directly transfer to the coupled case and we would still derive unconditional convergence. Moreover, with the techniques from~\cite{multiscale}, it is straightforward to rigorously include a numerical approximation $\operator_h(\cdot)$ of $\operator(\cdot)$ into the convergence analysis.

For a bounded Lipschitz domain $\Omega \subset \R^3$ and a time interval $(0,T)$, we now aim to solve~\eqref{eq:llg} on $\Omega_T $ supplemented by the initial and boundary conditions
\begin{align}\label{eq:boundary}
\mmm(0) = \mmm^0 \in \H^1(\Omega; \SSS^2)Ê\quad \text{ and } \quad \partial_n\mmm = 0 \text{ on } (0,T) \times \partial \Omega.
\end{align}
\end{subequations}
The constraint $|\mmm^0| = 1$ almost everywhere in $\Omega$ models the fact that we consider a constant temperature below the Curie point.
Multiplication of~\eqref{eq:llg} with $\mmm$ yields $\partial_t|\mmm|^2 = 2\mmm\cdot \mmm_t = 0$. Hence, the modulus $|\mmm| = 1$ is preserved in time. By imposing the modulus constraint on $\mmm^0$, this guarantees $|\mmm(t)| = 1$ for almost all times $t \in (0,T)$. 

For modeling the magnetostrictive component, we follow the approach of \textsc{Visintin}~\cite{visintin}. Here, the magnetostrictive field reads
\begin{align}\label{eq:magnetostriction1}
\hm:\Omega_T \to \R^3, \quad
(\hm)_q = \big(\hm(\uuu, \mmm)\big)_q := \sum_{i,j,p=1}^{3}\lambda^m_{ijpq}\sigma_{ij}(\mmm)_p,
\end{align}
where $(\cdot)_p$ denotes the $p$-th component of a vector field.
We implicitly assume linear dependence of the stress tensor $\ssigma = \{\sigma_{ij}\}$ on the elastic part of the total strain $\eeps^e = \{\eps_{ij}^e\}$ which is the converse form of Hook's law, i.e.\
\begin{align}\label{eq:magnetostriction2}
 &\ssigma := \llambda^e\eeps^e(\uuu,\mmm) : \Omega_T \longrightarrow \R^{3\times3}, \quad
 \sigma_{ij} = \sum_{p,q=1}^3 \lambda^e_{ijpq}\eps^e_{pq},\\
 &\eeps^e(\uuu,\mmm) := \eeps(\uuu) - \eeps^m(\mmm) : [0,T]\times \Omega \longrightarrow \R^{3\times3}\label{eq:magnetostriction3},
\end{align}
where $\uuu : \Omega_T \rightarrow \R^3$ denotes the \emph{displacement vector field}. The \emph{total strain} is defined by the symmetric part of the gradient of $\uuu$, i.e.\
\begin{align}\label{eq:magnetostriction4}
 \eps_{ij}(\uuu) := \frac12\left(\frac{\partial u_i}{\partial x_j} + \frac{\partial u_j}{\partial x_i}\right),
\end{align}
and the magnetic part of the total strain by
\begin{equation}\label{eq:magnetostriction5}
\begin{split}
 \eeps^m(\mmm) := \llambda^m \mmm\mmm^T : \Omega_T \longrightarrow \R^{3\times3}, \quad
 \eps_{ij}^m(\mmm) = \sum_{p,q=1}^3 \lambda_{i j p q}^m(\mmm)_p(\mmm)_q.
\end{split}
\end{equation}
In addition, we assume both material tensors $\llambda \in \{\llambda^e, \llambda^m\}$ to be symmetric $(\lambda_{ijpq} = \lambda_{jipq} = \lambda_{ijqp} = \lambda_{pqij})$ and positive definite
\begin{align}\label{eq:lambda_posdef}
\sum_{i,j,p,q=1}^3 \lambda_{ijpq}\xi_{ij}\xi_{pq} \ge \lambda^\star\sum_{i,j=1}^3\xi^2_{ij}
\end{align}
with bounded entries, i.e.\ there exists some $\overline \lambda$ with $\lambda_{ijpq}^e, \lambda_{ijpq}^m \le \overline \lambda$ for any $i,j,p,q = 1,2,3$. 
The stress tensor $\ssigma$ and the displacement field $\uuu$ (where we assume no external forces) are finally coupled via the conservation of momentum equation
\begin{subequations}\label{eq:con_mom_total}
\begin{align}\label{eq:con_mom}
 \varrho \uuu_{tt} - \nabla \cdot \ssigma = 0 \quad \text{ in } \Omega_T.
\end{align}
Here, we assume the mass density $\varrho>0$ to be constant and independent of the deformation. Equation~\eqref{eq:con_mom} is additionally supplemented by the initial and boundary conditions
\begin{align}\label{eq:init_con_mom}
 \uuu(0)  = \uuu^0 \text{ in } \Omega, \quad
 \uuu_{t}(0) = \vvvv^0 \text{ in } \Omega, \quad \text{and} \quad
 \uuu = 0 \text{ on } \partial \Omega.
\end{align}
\end{subequations}
Altogether, we thus aim to solve the coupled problem
\begin{align}\label{eq:problem_total}
\begin{cases}
\mmm_t - \alpha \mmm\times \mmm_t = -\mmm\times \heff \\
 \varrho \uuu_{tt} - \nabla \cdot \ssigma = 0,
\end{cases}
\end{align}
subject to the stated initial and boundary conditions.

Using the above boundary conditions, Hook's relation~\eqref{eq:magnetostriction2}, the definition of the total strain tensor~\eqref{eq:magnetostriction3}, and the symmetry of the tensors $\llambda^e$ and $\llambda^m$, we obtain the following variational formulation of~\eqref{eq:con_mom} 
\begin{align}\label{eq:con_mom_var}
 \big(\varrho \uuu_{tt}(t), \vphi\big) + \big(\llambda^e\eeps(\uuu)(t),\eeps(\vphi)\big) = \big(\llambda^e\eeps^m(\mmm)(t), \eeps(\vphi)\big) \quad \text{ for all } \vphi \in \H^1_0(\Omega).
\end{align}
Given these notations, we now define our notion of a weak solution for the coupled LLG-magnetostriction system~\eqref{eq:problem_total}, which is the same as in~\cite{carbou}.
\begin{definition}\label{def:weak_sol}
The tupel $(\mmm, \uuu)$ is called a weak solution of LLG with magnetostriction, if for all $T > 0$,
\begin{enumerate}
\item[(i)] $\mmm \in \H^1(\Omega_T)$ with $|\mmm| = 1$ almost everywhere in $\Omega_T$ and $\uuu \in H^1(\Omega_T)$;
\item[(ii)] for all $\pphi \in C^\infty(\Omega_T)$ and $\zzeta \in C_c^\infty\big([0,T); C^\infty(\Omega) \big)$, we have
\begin{align}\nonumber
&\int_{\Omega_T} \langle\mmm_t, \pphi\rangle - \alpha \int_{\Omega_T} \langle(\mmm \times \mmm_t), \pphi\rangle = -\Ce \int_{\Omega_T} \langle(\nabla \mmm \times\mmm), \nabla\pphi\rangle\\
&\hspace*{3cm} + \int_{\Omega_T} \langle(\hm \times \mmm), \pphi\rangle - \int_{\Omega_T} \langle\big(\operator(\mmm)\times \mmm\big),\pphi\rangle\label{eq:weak_sol1}\\
&-\varrho\int_{\Omega_T}\langle\uuu_t, \zzeta_t\rangle + \int_{\Omega_T}\langle\llambda^e \eeps(\uuu),\eeps(\zzeta)\rangle
 = \int_{\Omega_T}\langle\llambda^e \eeps(\mmm), \eeps(\zzeta)\rangle + \int_\Omega \langle\vvvv^0, \zzeta(0, \cdot)\rangle\label{eq:weak_sol2}
\end{align}
\item[(iii)] there holds $\mmm(0,\cdot) = \mmm^0$ and $\uuu(0,\cdot) = \uuu_0$ in the sense of traces;
\item[(iv)] for almost all $t' \in (0,T)$, we have bounded energy
\begin{equation}\label{eq:energy}
\begin{split}
\norm{\nabla \mmm(t')}{\L^2(\Omega)}^2 + \norm{\mmm_t}{\L^2(\Omega_{t'})}^2 &+ \norm{\nabla \uuu(t')}{\L^2(\Omega)}^2  + \norm{\uuu_t(t')}{\L^2(\Omega)}^2\le
 \c{bounded},
\end{split}
\end{equation}
where $\setc{bounded}$ is independent of $t$ and depends only on $|\Omega|, \mmm_0, \uuu_0,$ and $\vvvv_0$.
\end{enumerate}
\end{definition}


\section{Preliminaries}\label{sec:prelim}
For time discretization, we impose a uniform partition $0 = t_0 < t_1 < \hdots < t_N = T$ of the time interval $[0,T]$. The timestep size is denoted by $k=k_j := t_{j+1} - t_j$ for $j= 0, \hdots, N-1$. For each (discrete) function $\vphi$ which is continuous in time, $\vphi^j = \vphi(t_j)$ denotes the evaluation at time $t_j$. 
For the time derivatives in the conservation of momentum equation~\eqref{eq:con_mom_var}, we use difference quotients of first and second order which are denoted by
\begin{align}\label{eq:diffquo}
\dt z_i = \frac{z_i - z_{i-1}}{k}, \qquad \dt^2 z_i = \frac{\dt z_i - \dt z_{i-1}}{k} = \frac{z_i -2z_{i-1}+z_{i-2}}{k^2}.
\end{align}

For spatial discretization, let $\TT_h$ be a quasi-uniform, regular triangulation of the polyhedral bounded Lipshitz domain $\Omega \subset \R^3$ into tetrahedra. The spatial mesh-size is denoted by $h$. By $\SS^1(\TT_h)$, we denote the lowest-order Courant FEM space of globally continuous and piecewise affine functions from $\Omega$ to $\R^3$, i.e. 
\begin{align*}
\SS^1(\TT_h) := \{\pphi_h \in C(\overline \Omega; \R^3) : \pphi_h|_K \in \PP_1(K) \text{ for all } K \in \TT_h\}.
\end{align*}
By $\II_h:C(\overline \Omega;\R^3)\to \SS^1(\TT_h)$, we denote the nodal interpolation operator onto this space.
The set of nodes of the triangulation $\TT_h$ is denoted by $\NN_h$.

For the discretization of the magnetization $\mmm$ in the LLG equation~\eqref{eq:llg}, we define the set of admissible discrete magnetizations by
\begin{align*}
\MM_h := \{\pphi_h \in \SS^1(\TT_h) : |\pphi_h(\zzz)| = 1 \text{ for all } \zzz \in \NN_h\}.
\end{align*}
Furthermore, for $\pphi_h \in \MM_h$, let
\begin{align*}
\KK_{\pphi_h} := \{\ppsi_h \in \SS^1(\TT_h) : \ppsi_h(\zzz) \cdot \pphi_h(\zzz) = 0 \text{ for all } \zzz \in \NN_h\}
\end{align*}
be the discrete tangent space associated with $\pphi_h$. Here, $\xxx\cdot \yyy$ stands for the usual Euclidean scalar product of $\xxx, \yyy \in \R^3$ which is sometimes also denoted by $\langle \cdot, \cdot \rangle$ to improve readability. The $L^2(\Omega)$ scalar product is denoted by $(\cdot, \cdot)$ throughout. Due to the modulus constraint $\mmm_t\cdot \mmm = 0$, and thus $|\mmm(t)|=1$ almost everywhere in $\Omega_T$, we discretize the time variable $\vvv(t_j) :=\mmm_t(t_j)$ in the discrete tangent space of $\mmm_h^j$.

To discretize the equation of magnetoelasticity~\eqref{eq:con_mom_var}, we employ $\SS^1_0(\TT_h) := \SS^1(\TT_h) \cap H^1_0(\Omega)$. In addition, let $\mmm_h^0 \in \MM_h$ and $\uuu_h^0, \dot \uuu_h^0 \in \SS^1_0(\TT_h)$ be suitable approximations of the initial data obtained e.g.\ by projection. Further requirements on those initial data are specified below in Theorem~\ref{thm:convergence}. Finally, we define $\dt \uuu_h^0$ as $\dot \uuu_h^0$. Througout this work, we write $A \lesssim B$ if there holds $A \le C B$ for some $h$ and $k$ independent constant $C>0$.
\section{Algorithm}\label{sec:algo}
For discretization of the LLG equation, we follow the approach of \textsc{Alouges}~\cite{alouges2008} which has been generalized in \cite{alouges2011} and \textsc{Goldenits et al.}~\cite{multiscale, petra,gps}. The main idea  is to introduce a new free variable $\vvv \approx \mmm_t$ and to interpret LLG as a linear equation in $\vvv$. This ansatz exploits the formulation
\begin{align}\label{eq:llg_equiv}
\alpha \mmm_t+\mmm \times \mmm_t = \heff - (\mmm\cdot \heff)\mmm,
\end{align}
which is equivalent to~\eqref{eq:llg} under the constraint $|\mmm| = 1$ almost everywhere, see e.g.\ \cite[Lemma 1.2.1]{petra}.
The conservation of momentum equation is discretized in space by a standard FEM approach, and by finite differences in time. This approximation is in analogy to the work of \textsc{Banas and Slodicka} \cite{banas06}, where the focus is on FEM discretizations for \emph{strong solutions} of~\eqref{eq:llg_total} with~\eqref{eq:con_mom_total}. In addition and for computational ease, the two equations can be decoupled. We propose and analyze the following algorithm:
\begin{algorithm}\label{alg}
Input: Initial data $\mmm_h^0$ and $\uuu_h^0$, parameter $0 \le \theta \le 1, \alpha >0$. For $\ell = 0,\hdots, N-1$ iterate:
\begin{enumerate}
\item[(i)] Compute unique solution $\vvv_h^\ell \in \KK_{\mmm_h^\ell}$ such that for all $\vphi_h \in \KK_{\mmm_h^\ell}$, we have
\begin{equation}\label{eq:alg1}
\begin{split}
&\alpha(\vvv_h^\ell, \vphi_h) + \big((\mmm_h^\ell \times \vvv_h^\ell), \vphi_h\big) = -C_e \big(\nabla(\mmm_h^\ell + \theta k \vvv_h^\ell), \nabla \vphi_h\big) \\&\hspace*{31ex}+ \big(\hm(\uuu_h^\ell, \mmm_h^\ell), \vphi_h\big) - \big(\operator(\mmm_h^\ell), \vphi_h\big).
\end{split}
\end{equation}
\item[(ii)] Define $\mmm_h^{\ell+1} \in \MM_h$ nodewise by $\mmm_h^{\ell+1}(\zzz) = \frac{\mmm_h^\ell(\zzz) + k\vvv_h^\ell(\zzz)}{|\mmm_h^\ell(\zzz) + k\vvv_h^\ell(\zzz)|}$ for all $\zzz \in \NN_h$.
\item[(iii)] Compute unique solution $\uuu_h^{\ell+1} \in \SS^1_0(\TT_h)$ such that for all $\ppsi_h \in \SS^1_0(\TT_h)$, we have
\begin{equation}\label{eq:alg2}
\begin{split}
\varrho (\dt^2 \uuu_h^{\ell+1}, \ppsi_h) + \big(\llambda^e \eeps(\uuu_h^{\ell+1}), \eeps(\ppsi_h)\big) = \big(\llambda^e \eeps^m(\mmm_h^{\ell+1}), \eeps(\ppsi_h)\big).
\end{split}
\end{equation}
\end{enumerate}
\end{algorithm}
In the above algorithm, the discrete magnetostrictive contribution is given by
\begin{align*}
[\hm(\uuu_h^\ell, \mmm_h^\ell)]_q := \sum_{i,j,p=1}^3\lambda^m_{ijpq}\sigma^h_{ij}(\mmm_h^\ell)_p, \quad \text{ with } \quad
\ssigma^h = \llambda^e\big(\eeps(\uuu_h^\ell) - \eeps^m(\mmm_h^\ell)\big).
\end{align*}

Exploiting that we solve each of the two equations separately, we can immediately state well-posedness of Algorithm~\ref{alg}.
\begin{lemma}\label{lem:solve}
Algorithm~\ref{alg} is well defined, i.e.\ it admits unique discrete solutions $(\vvv_h^\ell, \mmm_h^{\ell+1}, \uuu_h^{\ell+1})$ in each step $\ell = 0, \hdots, N-1$ of the iteration. Moreover, we have $\norm{\mmm_h^\ell}{\L^\infty(\Omega)} = 1$ for all $\ell = 1, \hdots, N$.
\end{lemma}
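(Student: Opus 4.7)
The proof splits naturally into the three substeps, which are decoupled by construction.

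Step (i) is a linear variational problem on the finite-dimensional subspace $\KK_{\mmm_h^\ell} \subset \SS^1(\TT_h)$. The plan is to apply the Lax--Milgram lemma to the bilinear form
\[
a(\vvv,\vphi) := \alpha(\vvv,\vphi) + (\mmm_h^\ell \times \vvv, \vphi) + \theta k\, C_e (\nabla \vvv, \nabla \vphi),
\]
together with the obvious bounded linear functional on the right-hand side of~\eqref{eq:alg1}. Boundedness on $\H^1(\Omega)$ is immediate from Cauchy--Schwarz and $|\mmm_h^\ell(x)|\le 1$. For coercivity, the pointwise identity $(\mmm_h^\ell\times\vvv)\cdot\vvv=0$ yields $a(\vvv,\vvv) \ge \alpha\|\vvv\|_{\L^2(\Omega)}^2$; since $\KK_{\mmm_h^\ell}$ is finite-dimensional, this $\L^2$-coercivity is equivalent to $\H^1$-coercivity and closes the argument.

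For step (ii), the crucial observation is that $\vvv_h^\ell \in \KK_{\mmm_h^\ell}$ means $\vvv_h^\ell(\zzz)\cdot\mmm_h^\ell(\zzz) = 0$ at every node $\zzz \in \NN_h$, so by Pythagoras $|\mmm_h^\ell(\zzz) + k\vvv_h^\ell(\zzz)|^2 = 1 + k^2|\vvv_h^\ell(\zzz)|^2 \ge 1$. Hence the denominator never vanishes, $\mmm_h^{\ell+1}$ is well-defined, and $|\mmm_h^{\ell+1}(\zzz)| = 1$ by construction, proving $\mmm_h^{\ell+1} \in \MM_h$. For the $\L^\infty$ bound, on each tetrahedron $\mmm_h^{\ell+1}$ is, via the barycentric coordinates, a convex combination of its four vertex values; convexity of the Euclidean norm then gives $|\mmm_h^{\ell+1}(x)| \le \max_\zzz |\mmm_h^{\ell+1}(\zzz)| = 1$ pointwise, with equality at each node, so $\|\mmm_h^{\ell+1}\|_{\L^\infty(\Omega)}=1$.

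Step (iii) is another direct application of Lax--Milgram. Expanding $\dt^2 \uuu_h^{\ell+1} = k^{-2}\big(\uuu_h^{\ell+1} - 2\uuu_h^\ell + \uuu_h^{\ell-1}\big)$ rewrites~\eqref{eq:alg2} as a linear problem for $\uuu_h^{\ell+1}\in \SS^1_0(\TT_h)$ with symmetric bilinear form
\[
b(\uuu,\ppsi) := \frac{\varrho}{k^2}(\uuu,\ppsi) + (\llambda^e\eeps(\uuu), \eeps(\ppsi)),
\]
which is $\H^1_0$-coercive by positive definiteness of $\llambda^e$ combined with Korn's inequality; alternatively, the mass term already renders $b$ strictly positive definite on the finite-dimensional space $\SS^1_0(\TT_h)$. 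For the very first iterate $\ell=0$ the quantity $\uuu_h^{-1}$ is implicitly supplied by the initialization $\dt\uuu_h^0 = \dot\uuu_h^0$ fixed in Section~\ref{sec:prelim}, i.e.\ $\uuu_h^{-1} = \uuu_h^0 - k\,\dot\uuu_h^0$.

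None of the three substeps is genuinely difficult; the only point that demands a brief remark is step (i), where for the explicit choice $\theta=0$ the bilinear form $a$ is not $\H^1$-coercive on the continuous level, but $\L^2$-coercivity on the finite-dimensional tangent space $\KK_{\mmm_h^\ell}$ is enough to invoke Lax--Milgram.
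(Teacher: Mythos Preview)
Your proof is correct and follows essentially the same route as the paper: the same bilinear forms $a$ and $b$ on $\KK_{\mmm_h^\ell}$ and $\SS^1_0(\TT_h)$, the same $\L^2$-coercivity via $(\mmm_h^\ell\times\vvv)\cdot\vvv=0$ combined with finite dimension for step~(i), the same Pythagoras-and-barycentric argument for step~(ii), and the same Korn-based coercivity for step~(iii). Your explicit treatment of the case $\ell=0$ via $\uuu_h^{-1}=\uuu_h^0-k\,\dot\uuu_h^0$ and the closing remark on $\theta=0$ are small additions not spelled out in the paper.
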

\begin{proof}
We first show solvability of~\eqref{eq:alg1}.
We define the bilinear form 
\begin{align*}
a_1^\ell(\cdot, \cdot) : \KK_{\mmm_h^\ell} \times \KK_{\mmm_h^\ell} \rightarrow \R, \quad
a_1^\ell(\bf \phi, \vphi) := \alpha(\pphi, \vphi) + \theta C_e k(\nabla \pphi, \nabla \vphi) + \big((\mmm_h^\ell \times \pphi),\vphi\big)
\end{align*}
and the linear functional
$L_1^\ell(\vphi) := C_e (\nabla \mmm_h^\ell, \nabla \vphi) + \big(\hm(\uuu_h^\ell, \mmm_h^\ell), \vphi\big) - \big(\operator(\mmm_h^\ell), \vphi\big).$
Then, \eqref{eq:alg1} is equivalent to
$a_1^\ell(\vvv_h^\ell, \vphi_h) = L_1^\ell(\vphi_h)\text{ for all } \vphi_h \in \KK_{\mmm_h^\ell}.$
Note that $a_1^\ell(\cdot, \cdot)$ is positive definite for $\alpha >0$, i.e.\ $a^\ell(\vphi, \vphi) \ge \alpha \norm{\vphi}{\L^2(\Omega)}^2$.
Thus, by exploiting finite dimension, we see that there exists a unique $\vvv_h^\ell \in \KK_{\mmm_h^\ell}$ which solves \eqref{eq:alg1}. 
Due to pointwise orthogonality of $\mmm_h^\ell$ and $\vvv_h^\ell$, and the Pythagoras theorem, we get $|\mmm_h^\ell(\zzz) + k\vvv_h^\ell(\zzz)|^2 = |\mmm_h^\ell(\zzz)|^2 + k|\vvv_h^\ell(\zzz)|^2\ge 1$ and thus even step $(ii)$ of the above algorithm is well-defined. The bound $\norm{\mmm_h^\ell}{\L^\infty(\Omega)} = 1$ can be seen by the normalization at the grid points in combination with barycentric coordinates and the convexity of each tetrahedron.

For the second equation \eqref{eq:alg2}, we consider the bilinear form
\begin{align*}
a_2(\cdot, \cdot) : \SS^1_0(\TT_h) \times \SS^1_0(\TT_h) \rightarrow \R,\quad
a_2(\zzeta,\ppsi) &:= \frac{\varrho}{k^2}(\zzeta, \ppsi) + \big(\llambda^e \eeps(\zzeta), \eeps(\ppsi)\big)
\end{align*}
and the linear functional
$L_2^\ell(\ppsi) = \big(\llambda^e \eeps^m(\mmm_h^{\ell+1}), \eeps(\ppsi)\big) + \frac{\varrho}{k}(\dt \uuu_h^\ell, \ppsi) + \frac{\varrho}{k^2}(\uuu_h^\ell, \ppsi),$
According to~\eqref{eq:lambda_posdef} and Korn's inequality~\cite[Thm.\ 11.2.16]{brennerscott}, it holds that $a_2(\ppsi, \ppsi) \gtrsim \norm{\ppsi}{\H^1(\Omega)}^2$. With this notation, \eqref{eq:alg2} is equivalent to
$a_2(\uuu_h^{\ell+1}, \ppsi_h) = L_2^\ell(\ppsi_h)$ for all functions $\ppsi_h \in \SS^1_0(\TT_h),$
and hence, admits a unique solution $\uuu_h^{\ell+1} \in \SS^1_0(\TT_h)$ in each step of the loop. 
\end{proof}


\section{Main Theorem}\label{sec:convergence}

In this section, we aim to show that the preceding algorithm indeed converges towards the correct limit. Before we start with the actual analysis, we collect some general assumptions and some more notation. Throughout, we assume that the spatial meshes $\TT_h$ are uniformly shape regular and satisfy the angle condition
\begin{align}\label{eq:assum1}
\int_\Omega \nabla \zeta_i \cdot \nabla \zeta_j \le 0 \quad \text{ for all hat functions } \zeta_i, \zeta_j \in \SS^1(\TT_h) \text{ with } i \neq j.
\end{align}
This somewhat technical condition is a crucial ingredient of the convergence proof, since it yields the discrete energy decay 
\begin{align}\label{eq:en_decay}
\norm{\nablaÊ\mmm_h^{\ell+1}}{\L^2(\Omega)}^2\le \norm{\nabla (\mmm_h^\ell + k \vvv_h^\ell)}{\L^2(\Omega)}^2.
\end{align}
The estimate~\eqref{eq:en_decay} is a direct consequence of the inequality $\norm{\nabla \II_h(\mmm/|\mmm|)}{\L^2(\Omega)}^2 \le \norm{\nabla \II_h \mmm}{\L^2(\Omega)}^2$ which was proved by \textsc{Bartels} in~\cite{bartels}. We like to emphasize that the angle condition is always fulfilled for tetrahedral meshes with dihedral angles that are smaller than $\pi/2$, cf.~\cite{bartels}, and easily preserved for uniform mesh-refinement.
For $\xxx \in \Omega$ and $t \in [t_\ell, t_{\ell+1})$ and for $\gamma_h^\ell \in \{\mmm_h^\ell, \vvv_h^\ell, \uuu_h^\ell\}$, we define the time approximations
\begin{align*}
&\wgamma(t, \xxx) := \frac{t-t_\ell}{k}\gamma_h^{\ell+1}(\xxx) + \frac{t_{\ell+1} - t}{k}\gamma_h^\ell(\xxx),\\
&\wgamma^-(t, \xxx):= \gamma_h^{\ell}(\xxx),\quad \wgamma^+(t,\xxx):= \gamma_h^{\ell+1}(\xxx).
\end{align*}
Note that $ \wgamma$ can also be written as
$\wgamma(t,\xxx) = \gamma_h^\ell(\xxx) + (t-t_{\ell})\dt \gamma_h^{\ell+1}(\xxx).$
In addition, for $t \in [t_\ell, t_{\ell+1})$, we define
\begin{align*}
\wvvvv(t, \xxx):= \dt \uuu_h^\ell(\xxx) + (t-t_\ell)\dt^2\uuu_h^{\ell+1}(\xxx), \quad \wvvvv^-(t, \xxx):=\dt \uuu_h^\ell(\xxx), \quad \wvvvv^+(t, \xxx):= \dt \uuu_h^{\ell+1}(\xxx).
\end{align*}
The next statement is the main theorem of this work and particularly includes the main result from~\cite{carbou}.

\begin{theorem}\label{thm:convergence}
$\mathbf{(a)}$ Let $\theta \in (1/2,1]$ and suppose that the meshes $\TT_h$ are uniformly shape regular and satisfy the angle condition~\eqref{eq:assum1}. Moreover, let the general energy contribution $\operator$ be uniformly bounded in $\L^2(\Omega_T)$, i.e.\
\begin{align}\label{assumption:bounded}
\norm{\operator(\nnn)}{\L^2(\Omega_T)}^2 \le \Cpi \quad \text{ for all } |\nnn| \in \L^2(\Omega_T) \text{ with } \nnn \le 1 \text{ a.e.\ in } \Omega_T,
\end{align}
with an $\nnn$-independent constant $\Cpi>0$ and assume weak convergence of the initial data, i.e.
$\mmm_h^0 \weakto \mmm^0, \uuu_h^0 \weakto \uuu^0$ in $\H^1(\Omega)$, as well as $\vvvv_h^0 \weakto \vvvv^0$ in $\L^2(\Omega)$ as $h \rightarrow 0$.
 Under these assumptions, we have strong $\L^2(\Omega_T)$-convergence of $\mmm_{hk}^-$ towards some function $\mmm \in \H^1(\Omega_T)$.

\bigskip

\noindent
$\mathbf{(b)}$ Suppose that, in addition to the above assumptions, we have
\begin{align}\label{assumption:convergence}
\operator(\mmm_{hk}^-) \weakto \operator(\mmm) \quad \text{ weakly subconvergent in } \L^2(\Omega_T).
\end{align}
Then, the computed FE solutions $(\mmm_{hk}, \uuu_{hk})$ are weakly subconvergent in $\H^1(\Omega_T)\times \H^1(\Omega_T)$ towards some functions $(\mmm, \uuu)$, and those weak limits $(\mmm, \uuu)$ are a weak solution of LLG with magnetostriction. In particular, weak solutions exist and 
each weak accumulation point of $(\mmm_{hk}, \uuu_{hk})$ is a weak solution in the sense of Definition~\ref{def:weak_sol}.
%
%
%
\end{theorem}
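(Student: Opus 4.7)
The plan is to follow the Alouges-type convergence strategy of~\cite{alouges2008,multiscale,maxwell}: derive uniform discrete a-priori bounds, extract weakly convergent subsequences, upgrade the convergence of $\mmm_{hk}^-$ to strong $\L^2$ via Aubin--Lions, and finally pass to the limit in the discrete variational formulation to identify the weak limits as a weak solution in the sense of Definition~\ref{def:weak_sol}.

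\textbf{Step 1 (discrete energy bound).} I would test~\eqref{eq:alg1} against $\vvv_h^\ell \in \KK_{\mmm_h^\ell}$ itself to obtain $\alpha\norm{\vvv_h^\ell}{\L^2(\Omega)}^2$ on the left-hand side. On the right, combining the angle-condition consequence~\eqref{eq:en_decay} with the expansion $\norm{\nabla(\mmm_h^\ell+k\vvv_h^\ell)}{\L^2(\Omega)}^2 = \norm{\nabla\mmm_h^\ell}{\L^2(\Omega)}^2 + 2k(\nabla\mmm_h^\ell,\nabla\vvv_h^\ell) + k^2\norm{\nabla\vvv_h^\ell}{\L^2(\Omega)}^2$ yields a telescoping structure after multiplication by $C_e k$; the hypothesis $\theta>1/2$ then allows to absorb the $k^2$-remainder into $\theta C_e k\norm{\nabla\vvv_h^\ell}{\L^2(\Omega)}^2$. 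The terms involving $\hm$ and $\operator$ are dominated by Young's inequality using~\eqref{assumption:bounded} and $\norm{\mmm_h^\ell}{\L^\infty(\Omega)}=1$. Simultaneously, test~\eqref{eq:alg2} against $\dt\uuu_h^{\ell+1}$ to extract a discrete kinetic-plus-strain energy identity. The two estimates couple through $\hm$ and $\eps^m(\mmm)$; summing over $\ell$ and applying a discrete Gronwall lemma yields the uniform bound
\begin{align*}
\norm{\nabla\mmm_h^N}{\L^2(\Omega)}^2 + k\sum_{\ell=0}^{N-1}\norm{\vvv_h^\ell}{\L^2(\Omega)}^2 + \norm{\dt\uuu_h^N}{\L^2(\Omega)}^2 + \norm{\eps(\uuu_h^N)}{\L^2(\Omega)}^2 \lesssim 1.
\end{align*}

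\textbf{Step 2 (compactness and identification).} This bound translates into $\mmm_{hk},\mmm_{hk}^\pm$ bounded in $\H^1(\Omega_T)\cap \L^\infty(0,T;\H^1(\Omega))$, $\uuu_{hk},\uuu_{hk}^\pm$ bounded in $\H^1(\Omega_T)$, and $\wvvvv,\wvvvv^\pm$ bounded in $\L^\infty(0,T;\L^2(\Omega))$. Banach--Alaoglu yields weak limits $\mmm,\uuu\in\H^1(\Omega_T)$, and Aubin--Lions together with $\norm{\mmm_{hk}-\mmm_{hk}^\pm}{\L^2(\Omega_T)} = O(k^{1/2})$ upgrades this to strong $\L^2(\Omega_T)$-convergence of $\mmm_{hk}^-$ along a subsequence, proving part~$(\mathbf{a})$; the nodewise constraint $|\mmm_h^\ell|=1$ combined with pointwise a.e.\ convergence along a further subsequence transfers $|\mmm|=1$ a.e.\ to the limit. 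For~\eqref{eq:weak_sol1}, the interpolant $\II_h(\mmm_h^\ell\times\pphi(t_\ell,\cdot))$ lies in $\KK_{\mmm_h^\ell}$ and is admissible in~\eqref{eq:alg1} for any $\pphi\in C^\infty(\Omega_T)$; multiplying by $k$, summing, removing $\II_h$ via standard nodal-interpolation estimates, and combining strong convergence of $\mmm_{hk}^-$ with weak convergence of $\vvv_{hk}$, $\nabla\mmm_{hk}^-$, and assumption~\eqref{assumption:convergence} yields~\eqref{eq:weak_sol1}. For~\eqref{eq:weak_sol2}, I would test~\eqref{eq:alg2} with $\II_h\zzeta(t_{\ell+1},\cdot)$, multiply by $k$, sum, and apply discrete Abel summation to the $\dt^2$-term, which produces $-\varrho\int_{\Omega_T}\langle\uuu_t,\zzeta_t\rangle$ in the limit together with the boundary contribution $\varrho(\vvvv^0,\zzeta(0,\cdot))$ via $\dt\uuu_h^0\weakto\vvvv^0$.

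\textbf{Main obstacle.} I expect the principal difficulty to be twofold. First, the nonlinear, non-dissipative coupling through $\hm$ — which depends linearly on $\eps(\uuu)$ via Hooke's law — means that the two energy estimates do not decouple; closing the discrete Gronwall argument requires carefully pairing the magnetostrictive cross-terms from both equations so that only lower-order remainders survive, in contrast to the simpler coupling handled in~\cite{maxwell}. Second, identifying the limit of $\hm(\uuu_{hk}^-,\mmm_{hk}^-)$ with $\hm(\uuu,\mmm)$ requires passing to the limit in a product of the only weakly convergent factor $\eps(\uuu_{hk}^-)$ and the quadratic expression $\mmm_{hk}^-(\mmm_{hk}^-)^T$; this succeeds only because part~$(\mathbf{a})$ supplies strong $\L^2(\Omega_T)$-convergence of $\mmm_{hk}^-$, so that $\mmm_{hk}^-(\mmm_{hk}^-)^T\to\mmm\mmm^T$ strongly in $\L^1(\Omega_T)$.
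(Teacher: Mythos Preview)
Your overall strategy matches the paper's proof closely: energy estimates by testing with $\vvv_h^\ell$ and $\dt\uuu_h^{\ell+1}$, compactness, then passage to the limit with the interpolated test function $\II_h(\mmm_{hk}^-\times\zzeta)$. Two points deserve correction.

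\textbf{The coupling in the energy estimate is not handled by ``pairing''.} Your description suggests you expect the magnetostrictive cross-terms from the two equations to compensate one another. In the decoupled scheme of Algorithm~\ref{alg} no such cancellation survives, and a naive Young bound on the momentum cross-term fails: testing~\eqref{eq:alg2} with $\dt\uuu_h^{\ell+1}$, multiplying by $k$, and summing leaves on the right-hand side
\[
\sum_{\ell=0}^{j-1}\big(\llambda^e\eeps^m(\mmm_h^{\ell+1}),\,\eeps(\uuu_h^{\ell+1})-\eeps(\uuu_h^\ell)\big),
\]
and Young's inequality against the absorbable term $\sum_\ell\norm{\eeps(\uuu_h^{\ell+1})-\eeps(\uuu_h^\ell)}{\L^2(\Omega)}^2$ produces a remainder $\sum_\ell\norm{\eeps^m(\mmm_h^{\ell+1})}{\L^2(\Omega)}^2\sim N\sim T/k$, which blows up. The paper instead applies Abel summation (discrete integration by parts in time) to this sum, turning it into boundary terms plus $-k\sum_\ell\big(\dt\eeps^m(\mmm_h^{\ell+1}),\eeps(\uuu_h^\ell)\big)$; now $\norm{\dt\eeps^m(\mmm_h^{\ell+1})}{\L^2(\Omega)}\lesssim\norm{\dt\mmm_h^{\ell+1}}{\L^2(\Omega)}\le\norm{\vvv_h^\ell}{\L^2(\Omega)}$, and the right-hand side is controlled by $k\sum_\ell\norm{\vvv_h^\ell}{\L^2(\Omega)}^2+k\sum_\ell\norm{\eeps(\uuu_h^\ell)}{\L^2(\Omega)}^2$. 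Feeding in the LLG estimate (your Step~1 for~\eqref{eq:alg1}, which bounds $k\sum_\ell\norm{\vvv_h^\ell}{\L^2(\Omega)}^2$ by $k\sum_\ell\norm{\eeps(\uuu_h^\ell)}{\L^2(\Omega)}^2$ plus data) then leaves only $k\sum_\ell\norm{\eeps(\uuu_h^\ell)}{\L^2(\Omega)}^2$ on the right, and the discrete Gronwall lemma closes. This summation-by-parts step is the genuine mechanism; your Step~1 should name it explicitly.

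\textbf{The limit of the $\hm$-term needs more than strong $\L^1$.} Strong $\L^1(\Omega_T)$-convergence of $\mmm_{hk}^-(\mmm_{hk}^-)^T$ is not enough to pair with the merely weakly-$\L^2$ convergent factor $\eeps(\uuu_{hk}^-)$. The paper (and you should) exploit $\norm{\mmm_{hk}^-}{\L^\infty(\Omega_T)}\le 1$ together with pointwise a.e.\ convergence: then $(\mmm_{hk}^-)_p\,\zzeta_p\to\mmm_p\,\zzeta_p$ strongly in $\L^2(\Omega_T)$ by dominated convergence, and weak-$\L^2$ times strong-$\L^2$ gives the desired convergence of the pairing $\big(\eeps(\uuu_{hk}^-)_{ij},(\mmm_{hk}^-)_p\zzeta_p\big)\to\big(\eeps(\uuu)_{ij},\mmm_p\zzeta_p\big)$.
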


The proof will be done in roughly three steps.
\begin{enumerate}
\item[(i)] Boundedness of the discrete quantities and energies.
\item[(ii)] Existence of weakly convergent subsequences.
\item[(iii)] Identification of the limits with weak solutions of LLG with magnetostriction.
\end{enumerate}
As mentioned, we first show the desired boundedness and start with some preliminary lemmata.

\begin{lemma}\label{lem:h}
The discrete magnetostrictive component can be estimated by the total strain of the discrete displacement, i.e.\
\begin{align}\label{eq:h}
\norm{\hm(\uuu_h^\ell, \mmm_h^\ell)}{\L^2(\Omega)}^2 \le \c{hbounded1} \norm{\eeps(\uuu_h^\ell)}{\L^2(\Omega)}^2 + \c{hbounded2},
\end{align} 
for some constants $\setc{hbounded1}, \setc{hbounded2} > 0$ that depend only on $\overline \lambda$.
\end{lemma}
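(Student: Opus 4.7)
The plan is to unfold the definition of $\hm(\uuu_h^\ell,\mmm_h^\ell)$ and reduce everything to an elementwise estimate, using only the uniform bounds on the material tensors together with the $L^\infty$-bound $\|\mmm_h^\ell\|_{\L^\infty(\Omega)}=1$ established in Lemma~\ref{lem:solve}.

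First, I would expand the definition: recall that
$[\hm(\uuu_h^\ell,\mmm_h^\ell)]_q=\sum_{i,j,p}\lambda^m_{ijpq}\sigma^h_{ij}(\mmm_h^\ell)_p$
with $\sigma^h_{ij}=\sum_{p,q}\lambda^e_{ijpq}\big(\eps_{pq}(\uuu_h^\ell)-\eps^m_{pq}(\mmm_h^\ell)\big)$. Since all entries of $\llambda^e$ are bounded by $\overline\lambda$ and the index set is finite, a pointwise Cauchy--Schwarz (or just the triangle inequality with a fixed number of terms) yields
$|\ssigma^h(x)|\le C(\overline\lambda)\bigl(|\eeps(\uuu_h^\ell)(x)|+|\eeps^m(\mmm_h^\ell)(x)|\bigr)$
for almost every $x\in\Omega$.

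Next, I would control $|\eeps^m(\mmm_h^\ell)|$ via the constraint on the discrete magnetization. Since $\eps^m_{ij}(\mmm)=\sum_{p,q}\lambda^m_{ijpq}(\mmm)_p(\mmm)_q$, boundedness of $\llambda^m$ gives $|\eeps^m(\mmm_h^\ell)(x)|\le C(\overline\lambda)|\mmm_h^\ell(x)|^2$. The crucial input is $\|\mmm_h^\ell\|_{\L^\infty(\Omega)}=1$ from Lemma~\ref{lem:solve}, which follows from nodal normalization and convexity of tetrahedra. Hence $|\eeps^m(\mmm_h^\ell)(x)|\lesssim 1$ uniformly in $\Omega$. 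Combining with the previous step,
$|\ssigma^h(x)|\lesssim |\eeps(\uuu_h^\ell)(x)|+1$.

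Finally, boundedness of $\llambda^m$ and $\|\mmm_h^\ell\|_{\L^\infty}\le 1$ together with the structure $[\hm]_q=\sum \lambda^m_{ijpq}\sigma^h_{ij}(\mmm_h^\ell)_p$ give the pointwise estimate $|\hm(\uuu_h^\ell,\mmm_h^\ell)(x)|\lesssim |\ssigma^h(x)|\lesssim |\eeps(\uuu_h^\ell)(x)|+1$. Squaring with $(a+b)^2\le 2a^2+2b^2$ and integrating over $\Omega$ yields
$\|\hm(\uuu_h^\ell,\mmm_h^\ell)\|_{\L^2(\Omega)}^2\lesssim \|\eeps(\uuu_h^\ell)\|_{\L^2(\Omega)}^2+|\Omega|$,
which is exactly~\eqref{eq:h} with $\c{hbounded1},\c{hbounded2}$ depending only on $\overline\lambda$ (and $|\Omega|$). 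There is no real obstacle here; the only nontrivial ingredient beyond the tensor bounds is the $\L^\infty$-bound on $\mmm_h^\ell$, which is why this lemma is stated at the discrete level rather than for generic $\uuu,\mmm$.
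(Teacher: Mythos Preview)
Your proposal is correct and follows essentially the same route as the paper: bound $\hm$ by $\ssigma^h$ using the $\L^\infty$-bound $\norm{\mmm_h^\ell}{\L^\infty(\Omega)}=1$ from Lemma~\ref{lem:solve}, then bound $\ssigma^h$ by $\eeps(\uuu_h^\ell)$ plus a constant coming from the uniform $\L^\infty$-bound on $\eeps^m(\mmm_h^\ell)$. The only cosmetic difference is that you carry out the estimates pointwise before integrating, whereas the paper argues directly at the level of $\L^2$-norms.
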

\begin{proof}
By definition of the magnetostrictive part, we immediately get
\begin{align*}
\norm{\hm(\uuu_h^\ell, \mmm_h^\ell)}{\L^2(\Omega)}^2 \lesssim\overline \lambda \norm{\mmm_h^\ell}{\L^\infty(\Omega)}^2\norm{\ssigma^h}{\L^2(\Omega)}^2 \lesssim \norm{\ssigma^h}{\L^2(\Omega)}^2
\end{align*}
due to the normalization step. From the definition of the discrete stress tensor and the boundedness of the material tensors, we additionally get
\begin{align*}
\norm{\ssigma^h}{\L^2(\Omega)}^2 \lesssim \overline{\lambda} \norm{\eeps(\uuu_h^\ell)}{\L^2(\Omega)}^2 + \norm{\eeps^m(\mmm_h^\ell)}{\L^2(\Omega)}^2 \lesssim \norm{\eeps(\uuu_h^\ell)}{\L^2(\Omega)}^2 + C.
\end{align*}
This yields the assertion.
\end{proof}

\begin{lemma}\label{lem:kvhj_bounded}
For $j = 1,\hdots, N$, there holds
\begin{equation}\label{eq:kvhj_bounded}
\begin{split}
\norm{\nabla \mmm_h^j}{\L^2(\Omega)}^2 + (\theta-\frac12)k^2\sum_{\ell=0}^{j-1}&\norm{\nabla \vvv_h^\ell}{\L^2(\Omega)}^2+ k\sum_{\ell=0}^{j-1}\norm{\vvv_h^\ell}{\L^2(\Omega)}^2\\
 &\le \c{vbounded1} \big(\norm{\nabla \mmm_h^0}{\L^2(\Omega)}^2 + k \sum_{\ell=0}^{j-1}\norm{\eeps(\uuu_h^\ell)}{\L^2(\Omega)}^2 +\c{vbounded2}\big),
\end{split}
\end{equation}
for constants $\setc{vbounded1}, \setc{vbounded2} > 0$ that depend only on $\Cpi$ as well as $\c{hbounded1}$ and $\c{hbounded2}$ from the previous lemma.
\end{lemma}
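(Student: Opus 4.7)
\medskip
\noindent
\textbf{Proof plan for Lemma~\ref{lem:kvhj_bounded}.}
The strategy is the standard energy argument for Alouges-type schemes, adapted to accommodate the magnetostrictive and general energy contributions as forcing terms. My starting point is to test~\eqref{eq:alg1} with the admissible choice $\vphi_h = \vvv_h^\ell \in \KK_{\mmm_h^\ell}$. The advection-type term drops out because $\bigl((\mmm_h^\ell\times\vvv_h^\ell), \vvv_h^\ell\bigr) = 0$, and I obtain
\begin{align*}
\alpha \norm{\vvv_h^\ell}{\L^2(\Omega)}^2 + C_e\theta k \norm{\nabla \vvv_h^\ell}{\L^2(\Omega)}^2 = -C_e(\nabla \mmm_h^\ell, \nabla \vvv_h^\ell) + \bigl(\hm(\uuu_h^\ell,\mmm_h^\ell), \vvv_h^\ell\bigr) - \bigl(\operator(\mmm_h^\ell), \vvv_h^\ell\bigr).
\end{align*}

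Next, I rewrite the exchange term via the binomial identity
\begin{align*}
-2k(\nabla \mmm_h^\ell, \nabla \vvv_h^\ell) = \norm{\nabla \mmm_h^\ell}{\L^2(\Omega)}^2 + k^2\norm{\nabla \vvv_h^\ell}{\L^2(\Omega)}^2 - \norm{\nabla(\mmm_h^\ell + k\vvv_h^\ell)}{\L^2(\Omega)}^2,
\end{align*}
multiply the tested identity by $k$, and invoke the discrete energy decay~\eqref{eq:en_decay} to replace $\norm{\nabla(\mmm_h^\ell + k\vvv_h^\ell)}{\L^2(\Omega)}^2$ by the larger (with the correct sign) quantity $\norm{\nabla \mmm_h^{\ell+1}}{\L^2(\Omega)}^2$. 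Collecting terms yields
\begin{align*}
\alpha k\norm{\vvv_h^\ell}{\L^2(\Omega)}^2 &+ C_e(\theta - \tfrac12)k^2\norm{\nabla \vvv_h^\ell}{\L^2(\Omega)}^2 + \tfrac{C_e}{2}\norm{\nabla \mmm_h^{\ell+1}}{\L^2(\Omega)}^2 \\
&\le \tfrac{C_e}{2}\norm{\nabla \mmm_h^\ell}{\L^2(\Omega)}^2 + k\bigl(\hm(\uuu_h^\ell,\mmm_h^\ell), \vvv_h^\ell\bigr) - k\bigl(\operator(\mmm_h^\ell), \vvv_h^\ell\bigr),
\end{align*}
which exhibits the crucial telescoping structure in the $\nabla \mmm_h^\ell$ norm and uses $\theta \in (1/2, 1]$ to give a non-negative contribution from $\norm{\nabla \vvv_h^\ell}{\L^2(\Omega)}^2$.

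The two remaining forcing terms on the right-hand side are tamed by Young's inequality with a small enough parameter to absorb the velocity factors into $\frac{\alpha}{2}k\norm{\vvv_h^\ell}{\L^2(\Omega)}^2$; the leftover contributions are $\tfrac{k}{\alpha}\norm{\hm(\uuu_h^\ell,\mmm_h^\ell)}{\L^2(\Omega)}^2$ and $\tfrac{k}{\alpha}\norm{\operator(\mmm_h^\ell)}{\L^2(\Omega)}^2$. The first is estimated by Lemma~\ref{lem:h} in terms of $\norm{\eeps(\uuu_h^\ell)}{\L^2(\Omega)}^2$ plus a constant, while the second summed over $\ell$ is bounded by $\norm{\operator(\mmm_{hk}^-)}{\L^2(\Omega_T)}^2 \le \Cpi$ via assumption~\eqref{assumption:bounded}, using that $|\mmm_h^\ell|\le 1$ (Lemma~\ref{lem:solve}). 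Finally I sum from $\ell=0$ to $j-1$; the $\tfrac{C_e}{2}\norm{\nabla \mmm_h^{\cdot}}{\L^2(\Omega)}^2$ contributions telescope to $\tfrac{C_e}{2}(\norm{\nabla \mmm_h^j}{\L^2(\Omega)}^2 - \norm{\nabla \mmm_h^0}{\L^2(\Omega)}^2)$, yielding~\eqref{eq:kvhj_bounded}.

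The main subtlety (not really an obstacle, but worth flagging) is that the $\norm{\eeps(\uuu_h^\ell)}{\L^2(\Omega)}^2$ contribution from the magnetostrictive field \emph{must be left on the right-hand side} at this stage, since no a-priori control of the displacement is yet available. Closing the estimate into a genuine a-priori bound will require combining~\eqref{eq:kvhj_bounded} with an analogous discrete energy estimate for~\eqref{eq:alg2} and invoking a discrete Gronwall argument, which is to be carried out in a subsequent lemma.
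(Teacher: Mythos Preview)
Your proof is correct and follows essentially the same approach as the paper: test~\eqref{eq:alg1} with $\vvv_h^\ell$, combine with the expansion of $\norm{\nabla(\mmm_h^\ell+k\vvv_h^\ell)}{\L^2(\Omega)}^2$ and the energy decay~\eqref{eq:en_decay} to obtain the telescoping inequality, then sum over $\ell$, apply Young's inequality to the forcing terms, and invoke Lemma~\ref{lem:h} and assumption~\eqref{assumption:bounded}. Your closing remark that the strain term must be left on the right and closed later via a discrete Gronwall argument is exactly what the paper does in the subsequent Proposition~\ref{prop:u_bounded}.
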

\begin{proof}
In~\eqref{eq:alg1}, we use the special test function $\vphi_h = \vvv_h^\ell \in \KK_{\mmm_h^\ell}$ and get
\begin{align*}
 \alpha (\vvv_h^\ell, \vvv_h^\ell) + (\underbrace{(\mmm_h^\ell \times \vvv_h^\ell), \vvv_h^\ell)}_{=0} = 
 - C_e \big(\nabla(\mmm_h^\ell + \theta k \vvv_h^\ell), \nabla \vvv_h^\ell\big) + \big(\hm(\uuu_h^\ell, \mmm_h^\ell), \vvv_h^\ell\big) - \big(\operator(\mmm_h^\ell),\vvv_h^\ell\big),
\end{align*}
whence
\begin{align*}
\alpha \norm{\vvv_h^\ell}{\L^2(\Omega)}^2 + C_e \theta\,k \norm{\nabla \vvv_h^\ell}{\L^2(\Omega)}^2 = -C_e(\nabla \mmm_h^\ell, \nabla \vvv_h^\ell) + \big(\hm(\uuu_h^\ell, \mmm_h^\ell), \vvv_h^\ell\big) - \big(\operator(\mmm_h^\ell), \vvv_h^\ell\big).
\end{align*}
Next, we use the fact that $\norm{\nabla \mmm_h^{\ell+1}}{\L^2(\Omega)}^2 \le \norm{\nabla(\mmm_h^\ell + k\vvv_h^\ell)}{\L^2(\Omega)}^2$, see~\eqref{eq:en_decay}, to obtain
\begin{equation}\label{eq:blub3}
\begin{split}
\frac12\norm{\nabla \mmm_h^{\ell+1}}{\L^2(\Omega)}^2 &\le \frac12\norm{\nabla \mmm_h^\ell}{\L^2(\Omega)}^2 + k (\nabla \mmm_h^\ell, \nabla \vvv_h^\ell) + \frac{k^2}{2}\norm{\nabla \vvv_h^\ell}{\L^2(\Omega)}^2\\
&\le \frac12 \norm{\nabla \mmm_h^\ell}{\L^2(\Omega)}^2 - \big(\theta - 1/2\big)k^2\norm{\nabla \vvv_h^\ell}{\L^2(\Omega)}^2\\
&\quad - \frac{\alpha k}{C_e}\norm{\vvv_h^\ell}{\L^2(\Omega)}^2 + \frac{k}{C_e}\big(\hm(\uuu_h^\ell, \mmm_h^\ell), \vvv_h^\ell\big) - \frac{k}{C_e}\big(\operator(\mmm_h^\ell), \vvv_h^\ell\big).
\end{split}
\end{equation}
We sum over the time intervals from $0$ to $j-1$, which yields for any $\nu > 0$
\begin{align*}
\frac12&\norm{\nabla \mmm_h^j}{\L^2(\Omega)}^2  + \frac{\alpha k}{C_e}\sum_{\ell=0}^{j-1}\norm{\vvv_h^\ell}{\L^2(\Omega)}^2 \\
&\le \frac12 \norm{\nabla \mmm_h^0}{\L^2(\Omega)}^2 
+ \frac{k}{C_e}\sum_{\ell=0}^{j-1}\big[\big(\hm(\uuu_h^\ell, \mmm_h^\ell), \vvv_h^\ell\big) - \big(\operator(\mmm_h^\ell), \vvv_h^\ell\big)\big] -\big(\theta - 1/2\big)k^2\sum_{\ell=0}^{j-1}\norm{\nabla \vvv_h^\ell}{\L^2(\Omega)}^2\\
&\le\frac12 \norm{\nabla \mmm_h^0}{\L^2(\Omega)}^2 
+ \frac{k}{4C_e \nu}\sum_{\ell=0}^{j-1}\big(\norm{\hm(\uuu_h^\ell, \mmm_h^\ell)}{\L^2(\Omega)}^2 + \norm{\operator(\mmm_h^\ell)}{\L^2(\Omega)}^2\big) \\
&\quad+ \frac{k \nu}{C_e}\sum_{\ell=0}^{j-1}\norm{\vvv_h^\ell}{\L^2(\Omega)}^2 - \big(\theta - 1/2\big)k^2\sum_{\ell=0}^{j-1}\norm{\nabla \vvv_h^\ell}{\L^2(\Omega)}^2 =: \text{RHS}.\\
\intertext{With Lemma~\ref{lem:h} and the uniform boundedness~\eqref{assumption:bounded} of $\operator(\cdot)$, we further obtain}
\text{RHS}&\lesssim  \frac12 \norm{\nabla \mmm_h^0}{\L^2(\Omega)}^2 
+\frac{k\c{hbounded1}}{4C_e \nu}\sum_{\ell=0}^{j-1}\big(\norm{\eeps(\uuu_h^\ell)}{\L^2(\Omega)}^2  + \c{hbounded2}\big) + \Cpi\\
&\quad+ \frac{k \nu}{C_e}\sum_{\ell=0}^{j-1}\norm{\vvv_h^\ell}{\L^2(\Omega)}^2 - \big(\theta - 1/2\big)k^2\sum_{\ell=0}^{j-1}\norm{\nabla \vvv_h^\ell}{\L^2(\Omega)}^2.
\end{align*}
In total we thus derive
\begin{align*}
\frac12\norm{\nabla \mmm_h^j}{\L^2(\Omega)}^2  &+ \frac{k}{C_e}(\alpha - \nu)\sum_{\ell=0}^{j-1}\norm{\vvv_h^\ell}{\L^2(\Omega)}^2 +(\theta-\frac12)k^2\sum_{\ell=0}^{j-1}\norm{\nabla \vvv_h^\ell}{\L^2(\Omega)}^2
\\ &\lesssim  \frac12 \norm{\nabla \mmm_h^0}{\L^2(\Omega)}^2 
+\frac{k\c{hbounded1}}{4C_e \nu}\sum_{\ell=0}^{j-1}\big(\norm{\eeps(\uuu_h^\ell)}{\L^2(\Omega)}^2 + \c{hbounded2}\big) + \Cpi
\end{align*}
Taking $\nu < \alpha$ thus yields the desired result.
\end{proof}

Given the last two lemmata, we now aim to show boundedness of the discrete quantities involved in equation~\eqref{eq:alg2}, i.e.\ boundedness  of the discrete displacement approximations. The following result has basically already been stated in~\cite[Lemma 3]{banas06}. Since we require some important modifications, 
we include the proof here.

\begin{proposition}\label{prop:u_bounded}
For any $j = 1, \hdots, N$ and $\frac12 \le \theta \le 1$, there holds
\begin{align}\label{eq:u_bounded}
 \norm{\dt \uuu_h^j}{\L^2(\Omega)}^2 + \sum_{\ell= 1}^j\norm{\dt \uuu_h^\ell - \dt \uuu_h^{\ell-1}}{\L^2(\Omega)}^2
+ \norm{\eeps(\uuu_h^j)}{\L^2(\Omega)}^2 + \sum_{\ell=1}^j\norm{\eeps(\uuu_h^\ell) - \eeps(\uuu_h^{\ell-1})}{\L^2(\Omega)}^2 \le \c{ubounded}
\end{align}
for some $h$ and $k$ independent constant $\setc{ubounded}>0$ which depends only on $\llambda^\star$ and the constants $\c{vbounded1}$ and $\c{vbounded2}$ from Lemma~\ref{lem:kvhj_bounded}.
\end{proposition}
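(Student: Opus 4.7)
The plan is a discrete energy estimate for the wave-type equation~\eqref{eq:alg2}, combined with Abel summation to handle the coupling term, and finally a discrete Gronwall argument. I would test~\eqref{eq:alg2} with $\ppsi_h = k\,\dt\uuu_h^{\ell+1} = \uuu_h^{\ell+1} - \uuu_h^\ell \in \SS^1_0(\TT_h)$. For the inertial term I would use the standard polarisation identity $2(x-y)\cdot x = |x|^2 - |y|^2 + |x-y|^2$ to obtain
\begin{align*}
\varrho\,k\,(\dt^2\uuu_h^{\ell+1}, \dt\uuu_h^{\ell+1})
 = \tfrac{\varrho}{2}\bigl(\norm{\dt\uuu_h^{\ell+1}}{\L^2(\Omega)}^2 - \norm{\dt\uuu_h^\ell}{\L^2(\Omega)}^2 + \norm{\dt\uuu_h^{\ell+1} - \dt\uuu_h^\ell}{\L^2(\Omega)}^2\bigr),
\end{align*}
and analogously for the stiffness term, exploiting symmetry of $\llambda^e$ to obtain the same telescoping structure in the energy seminorm $\norm{\eeps(\cdot)}{\llambda^e}^2 := (\llambda^e\eeps(\cdot),\eeps(\cdot))$, which is equivalent to $\norm{\eeps(\cdot)}{\L^2(\Omega)}^2$ by~\eqref{eq:lambda_posdef}. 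Summation over $\ell = 0, \dots, j-1$ produces exactly the four quantities on the left-hand side of~\eqref{eq:u_bounded}, plus boundary contributions in the initial data (controlled by assumption) and a coupling term $R_j := \sum_{\ell=0}^{j-1}\big(\llambda^e\eeps^m(\mmm_h^{\ell+1}),\,\eeps(\uuu_h^{\ell+1})-\eeps(\uuu_h^\ell)\big)$.

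The key obstacle is that a direct application of Young's inequality to $R_j$ fails: the jumps $\norm{\eeps(\uuu_h^{\ell+1})-\eeps(\uuu_h^\ell)}{\L^2(\Omega)}^2$ can be absorbed into the left-hand side, but the complementary terms $\sum_\ell \norm{\eeps^m(\mmm_h^{\ell+1})}{\L^2(\Omega)}^2$ are only bounded by the number of timesteps $N = T/k$ (since $|\mmm_h^{\ell+1}|\le 1$), which blows up as $k \to 0$. To circumvent this, I would apply Abel's summation by parts in $R_j$ to shift the difference quotient from $\uuu_h$ onto $\eeps^m(\mmm_h)$:
\begin{align*}
R_j = \big(\llambda^e\eeps^m(\mmm_h^j),\eeps(\uuu_h^j)\big) - \big(\llambda^e\eeps^m(\mmm_h^1),\eeps(\uuu_h^0)\big) - \sum_{\ell=1}^{j-1}\big(\llambda^e(\eeps^m(\mmm_h^{\ell+1})-\eeps^m(\mmm_h^\ell)),\eeps(\uuu_h^\ell)\big).
\end{align*}
The first term is controlled by Young's inequality with a small parameter so as to absorb $\delta\,\norm{\eeps(\uuu_h^j)}{\L^2(\Omega)}^2$ into the LHS, using that $\norm{\eeps^m(\mmm_h^j)}{\L^2(\Omega)}$ is bounded thanks to $\norm{\mmm_h^j}{\L^\infty(\Omega)}=1$ from Lemma~\ref{lem:solve}. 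The second term is bounded by the initial data.

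The heart of the argument is then the sum over $\ell$. Here I would combine two ingredients. First, the bilinear structure of $\eeps^m = \llambda^m \mmm \mmm^T$ together with $\norm{\mmm_h^\ell}{\L^\infty(\Omega)} = 1$ gives a Lipschitz estimate $\norm{\eeps^m(\mmm_h^{\ell+1})-\eeps^m(\mmm_h^\ell)}{\L^2(\Omega)} \lesssim \norm{\mmm_h^{\ell+1}-\mmm_h^\ell}{\L^2(\Omega)}$. Second, the nodal projection step (ii) of Algorithm~\ref{alg} and the nodal orthogonality $\mmm_h^\ell(\zzz)\perp \vvv_h^\ell(\zzz)$ imply, after a short elementary computation on $|(a+b)/|a+b| - a|$ with $|a|=1$, $a\perp b$, the pointwise bound $|\mmm_h^{\ell+1}(\zzz) - \mmm_h^\ell(\zzz)| \le k\,|\vvv_h^\ell(\zzz)|$, and hence $\norm{\mmm_h^{\ell+1}-\mmm_h^\ell}{\L^2(\Omega)} \le k\,\norm{\vvv_h^\ell}{\L^2(\Omega)}$. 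Applying Young's inequality to each term of the sum then yields
\begin{align*}
|R_j| \lesssim 1 + \delta\,\norm{\eeps(\uuu_h^j)}{\L^2(\Omega)}^2 + k\sum_{\ell=0}^{j-1}\norm{\vvv_h^\ell}{\L^2(\Omega)}^2 + k\sum_{\ell=1}^{j-1}\norm{\eeps(\uuu_h^\ell)}{\L^2(\Omega)}^2.
\end{align*}

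To close the argument I would invoke Lemma~\ref{lem:kvhj_bounded} to replace $k\sum\norm{\vvv_h^\ell}{\L^2(\Omega)}^2$ by a constant plus a further multiple of $k\sum\norm{\eeps(\uuu_h^\ell)}{\L^2(\Omega)}^2$. After absorbing $\delta\norm{\eeps(\uuu_h^j)}{\L^2(\Omega)}^2$ into the LHS for sufficiently small $\delta$, we arrive at
\begin{align*}
\norm{\dt\uuu_h^j}{\L^2(\Omega)}^2 + \norm{\eeps(\uuu_h^j)}{\L^2(\Omega)}^2 + \text{jumps} \;\le\; C + C\,k\sum_{\ell=0}^{j-1}\norm{\eeps(\uuu_h^\ell)}{\L^2(\Omega)}^2,
\end{align*}
and the discrete Gronwall lemma with $j k \le T$ delivers the claimed uniform bound. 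The main obstacle, as emphasized, is the coupling term $R_j$: the naive Young bound is useless and one must exploit the telescoping structure via Abel summation, combined with the nonlinear-but-cheap bound $\norm{\mmm_h^{\ell+1}-\mmm_h^\ell}{\L^2(\Omega)} \le k\norm{\vvv_h^\ell}{\L^2(\Omega)}$ together with Lemma~\ref{lem:kvhj_bounded}, which makes the two energy estimates for $\mmm_h$ and $\uuu_h$ intertwine before Gronwall can be applied.
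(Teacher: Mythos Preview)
Your proposal is correct and follows essentially the same route as the paper: test~\eqref{eq:alg2} with $\uuu_h^{\ell+1}-\uuu_h^\ell$, use the polarisation/Abel identity to produce the telescoping energy structure, apply Abel summation by parts to the coupling term $R_j$, control the resulting boundary term via Young with a small parameter and the summed term via the Lipschitz bound on $\eeps^m$ together with $\norm{\mmm_h^{\ell+1}-\mmm_h^\ell}{\L^2(\Omega)}\le k\norm{\vvv_h^\ell}{\L^2(\Omega)}$, then invoke Lemma~\ref{lem:kvhj_bounded} and close with the discrete Gronwall lemma. You have also correctly identified the main obstacle (the naive Young bound on $R_j$ blows up as $k\to 0$) and its resolution.
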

\begin{proof}
We use $\ppsi_h = \uuu_h^{\ell+1} - \uuu_h^{\ell}$ as test function in~\eqref{eq:alg2} and sum up for $\ell=0, \hdots, j-1$ to see
\begin{align*}
\varrho(\dt \uuu_h^{\ell+1} - \dt \uuu_h^\ell, \dt \uuu_h^{\ell+1}) + \big(\llambda^e \eeps(\uuu_h^{\ell+1}), \eeps(\uuu_h^{\ell+1}) - \eeps(\uuu_h^\ell)\big) = \big(\llambda^e \eeps^m(\mmm_h^{\ell+1}), \eeps(\uuu_h^{\ell+1}) - \eeps(\uuu_h^\ell)\big).
\end{align*}
Recall that the Abel summation formula shows for any $v_\ell \in \R$ and $j \ge 0$
\begin{align*}
\sum_{\ell=0}^{j-1}(v_{\ell+1} - v_\ell, v_{\ell+1}) = \frac12|v_{j}|^2 - \frac12|v_0|^2 + \frac12\sum_{\ell=0}^{j-1}|v_{\ell+1}-v_\ell|^2.
\end{align*}
This in combination with the symmetry and the positive definiteness~\eqref{eq:lambda_posdef} of $\llambda^e$ now yields
\begin{align*}
  \norm{\dt \uuu_h^j}{\L^2(\Omega)}^2 &+ \sum_{\ell= 1}^j\norm{\dt \uuu_h^\ell - \dt \uuu_h^{\ell-1}}{\L^2(\Omega)}^2
+ \norm{\eeps(\uuu_h^j)}{\L^2(\Omega)}^2 + \sum_{\ell=1}^j\norm{\eeps(\uuu_h^\ell) - \eeps(\uuu_h^{\ell-1})}{\L^2(\Omega)}^2\\
&\le \widetilde C + C\sum_{\ell=1}^j\big(\llambda^e\eeps^m(\mmm_h^\ell), \eeps(\uuu_h^\ell) - \eeps(\uuu_h^{\ell-1})\big),
\end{align*}
for some generic constants $C, \widetilde C > 0$.
Note that the terms $\norm{\dt \uuu_h^0}{\L^2(\Omega)}^2 =\norm{\vvvv_h^0}{\L^2(\Omega)}^2$ and $\norm{\eeps(\uuu_h^0)}{\L^2(\Omega)}^2$ are uniformly bounded due to the assumed convergence of these initial data and are hidden in the constant $\widetilde C$.

Next, we rewrite the sum on the right-hand side as
\begin{align*}
 \sum_{\ell=1}^j&\big(\llambda^e\eeps^m(\mmm_h^\ell), \eeps(\uuu_h^\ell) - \eeps(\uuu_h^{\ell-1})\big)\\
&=(\llambda^e\eeps^m(\mmm_h^j), \eeps(\uuu_h^j)) - (\llambda^e\eeps^m(\mmm_h^1), \eeps(\uuu_h^0)) - \sum_{\ell=1}^{j-1}\big(\llambda^e\eeps^m(\mmm_h^{\ell+1}) - \llambda^e\eeps^m(\mmm_h^\ell), \eeps(\uuu_h^\ell)\big)\\
&=(\llambda^e\eeps^m(\mmm_h^j), \eeps(\uuu_h^j)) - (\llambda^e\eeps^m(\mmm_h^1), \eeps(\uuu_h^0)) - k\sum_{\ell=1}^{j-1}\big(\llambda^e\dt\eeps^m(\mmm_h^{\ell+1}), \eeps(\uuu_h^\ell)\big).
\end{align*}
For any $\eta> 0$ and due to boundedness of $\llambda^e$, we further get
\begin{align*}
  \norm{\dt \uuu_h^j}{\L^2(\Omega)}^2 &+ \sum_{\ell= 1}^j\norm{\dt \uuu_h^\ell - \dt \uuu_h^{\ell-1}}{\L^2(\Omega)}^2
+ \norm{\eeps(\uuu_h^j)}{\L^2(\Omega)}^2 + \sum_{\ell=1}^j\norm{\eeps(\uuu_h^\ell) - \eeps(\uuu_h^{\ell-1})}{\L^2(\Omega)}^2\\
& \lesssim 1 + k\sum_{\ell=1}^{j-1}\norm{\dt \eeps^m(\mmm_h^{\ell+1})}{\L^2(\Omega)}^2 + k\sum_{\ell=1}^{j-1}\norm{\eeps(\uuu_h^\ell)}{\L^2(\Omega)}^2 + \frac{1}{4\eta} \norm{\eeps^m(\mmm_h^j)}{\L^2(\Omega)}^2\\
& \quad+ \eta \norm{\eeps(\uuu_h^j)}{\L^2(\Omega)}^2 + \norm{\eeps^m(\mmm_h^1)}{\L^2(\Omega)}^2 +  \norm{\eeps(\uuu_h^0)}{\L^2(\Omega)}^2.
\end{align*}
For sufficiently small $\eta$, this can be simplified to
\begin{align*}
   \norm{\dt \uuu_h^j}{\L^2(\Omega)}^2 &+ \sum_{\ell= 1}^j\norm{\dt \uuu_h^\ell - \dt \uuu_h^{\ell-1}}{\L^2(\Omega)}^2
+ \norm{\eeps(\uuu_h^j)}{\L^2(\Omega)}^2 + \sum_{\ell=1}^j\norm{\eeps(\uuu_h^\ell) - \eeps(\uuu_h^{\ell-1})}{\L^2(\Omega)}^2\\
& \lesssim 1 + k\big(\sum_{\ell=1}^{j-1}\norm{\dt \eeps^m(\mmm_h^{\ell+1})}{\L^2(\Omega)}^2 + \sum_{\ell=1}^{j-1}\norm{\eeps(\uuu_h^\ell)}{\L^2(\Omega)}^2\big).
\end{align*}
Here, we again used convergence of the initial data.
Using the boundedness and symmetry of $\llambda^m$ in combination with boundedness of $\norm{\mmm_h^\ell}{\L^\infty(\Omega)}$, straightforward calculation shows
$\norm{\dt \eeps^m(\mmm_h^\ell)}{\L^2(\Omega)}^2 \lesssim \norm{\dt \mmm_h^\ell}{\L^2(\Omega)}^2,$
cf.\ \cite{diss}. In combination with
\begin{align*}
\norm{\dt \mmm_h^\ell}{\L^2(\Omega)}^2 = \norm{\frac{\mmm_h^\ell - \mmm_h^{\ell-1}}{k}}{\L^2(\Omega)}^2\le \norm{\vvv_h^{\ell-1}}{\L^2(\Omega)}^2,
\end{align*}
cf.\ \cite{alouges2008, alouges2011, multiscale} resp.~\cite[Lemma 3.3.2]{petra}, this results in
\begin{align*}
  \norm{\dt \uuu_h^j}{\L^2(\Omega)}^2 &+ \sum_{\ell= 1}^j\norm{\dt \uuu_h^\ell - \dt \uuu_h^{\ell-1}}{\L^2(\Omega)}^2
+ \norm{\eeps(\uuu_h^j)}{\L^2(\Omega)}^2 + \sum_{\ell=1}^j\norm{\eeps(\uuu_h^\ell) - \eeps(\uuu_h^{\ell-1})}{\L^2(\Omega)}^2\\
& \lesssim 1 + k\big(\sum_{\ell=1}^{j-1}\norm{\vvv_h^\ell}{\L^2(\Omega)}^2 + \sum_{\ell=1}^{j-1}\norm{\eeps(\uuu_h^\ell)}{\L^2(\Omega)}^2\big).
\end{align*}
Next, we apply Lemma~\ref{lem:kvhj_bounded} to see
\begin{align*}
  \norm{\dt \uuu_h^j}{\L^2(\Omega)}^2 &+ \sum_{\ell= 1}^j\norm{\dt \uuu_h^\ell - \dt \uuu_h^{\ell-1}}{\L^2(\Omega)}^2
+ \norm{\eeps(\uuu_h^j)}{\L^2(\Omega)}^2 + \sum_{\ell=1}^j\norm{\eeps(\uuu_h^\ell) - \eeps(\uuu_h^{\ell-1})}{\L^2(\Omega)}^2\\
& \lesssim 1 + \big(\norm{\nabla \mmm_h^0}{\L^2(\Omega)}^2 + k \sum_{\ell=0}^{j-1}\norm{\eeps(\uuu_h^\ell)}{\L^2(\Omega)}^2 + k\sum_{\ell=1}^{j-1}\norm{\eeps(\uuu_h^\ell)}{\L^2(\Omega)}^2\big)\\
&\lesssim 1 + k\sum_{\ell=0}^{j-1}\norm{\eeps(\uuu_h^\ell)}{\L^2(\Omega)}^2.
\end{align*}
Application of a discrete version of Gronwall's lemma finally yields the assertion.
\end{proof}

In order to show the desired $\H^1(\Omega_T)$-convergence of $\uuu$, we still need to show uniform boundedness of the $\L^2(\Omega_T)$-part. This result is stated in the next corollary.

\begin{corollary}\label{cor:u_l2_bounded}
Due to the boundary conditions employed, the Poincar\'e inequality in combination with Korn's inequality shows
\begin{align}\nonumber
\norm{\uuu_h^j}{\L^2(\Omega)}^2 + \sum_{\ell=1}^j\norm{\uuu_h^\ell - &\uuu_h^{\ell-1}}{\L^2(\Omega)}^2 \le \c{poincare}\big(\norm{\nabla \uuu_h^j}{\L^2(\Omega)}^2 + \sum_{\ell=1}^j\norm{\nabla(\uuu_h^\ell - \uuu_h^{\ell-1})}{\L^2(\Omega)}^2\big)\\
& \qquad \le \c{poincare}\big(\norm{\eeps (\uuu_h^j)}{\L^2(\Omega)}^2 + \sum_{\ell=1}^j\norm{\eeps(\uuu_h^\ell - \uuu_h^{\ell-1})}{\L^2(\Omega)}^2\big)\label{eq:u_l2_bounded}
\end{align}
for any $j = 1, \hdots, N$, where the constant $\setc{poincare}>0$ stems from Poincar\'e's inequality and thus only depends on the diameter of $\Omega$. According to Proposition~\ref{prop:u_bounded}, the right-hand side of~\eqref{eq:u_l2_bounded} is uniformly bounded. \qed
\end{corollary}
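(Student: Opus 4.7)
The plan is to derive the displayed chain of inequalities in two reduction steps and then invoke Proposition~\ref{prop:u_bounded} to close the estimate.

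First, I note that, by construction, $\uuu_h^\ell \in \SS_0^1(\TT_h) \subset H^1_0(\Omega)$ for every $\ell = 0,\ldots,N$, so the differences $\uuu_h^\ell - \uuu_h^{\ell-1}$ also lie in $H^1_0(\Omega)$. This makes the Poincar\'e inequality applicable termwise, with a constant depending only on $\diam(\Omega)$, giving $\norm{\vphi}{\L^2(\Omega)}^2 \le \c{poincare}\norm{\nabla \vphi}{\L^2(\Omega)}^2$ for $\vphi \in H^1_0(\Omega)$. Applying this to $\vphi = \uuu_h^j$ and to each $\vphi = \uuu_h^\ell - \uuu_h^{\ell-1}$ and summing over $\ell$ yields the first inequality in~\eqref{eq:u_l2_bounded}.

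Next, I would invoke Korn's first inequality on $H^1_0(\Omega)$ (the same reference from \textsc{Brenner \& Scott} used in the proof of Lemma~\ref{lem:solve}), which guarantees $\norm{\nabla \vphi}{\L^2(\Omega)}^2 \lesssim \norm{\eeps(\vphi)}{\L^2(\Omega)}^2$ for all $\vphi \in H^1_0(\Omega)$. Applying this to $\vphi = \uuu_h^j$ and to $\vphi = \uuu_h^\ell - \uuu_h^{\ell-1}$ replaces each squared gradient norm by the squared norm of its symmetric part; absorbing the Korn constant into $\c{poincare}$ produces the second inequality.

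Finally, the right-hand side of~\eqref{eq:u_l2_bounded} consists exactly of two of the four summands appearing on the left-hand side of the uniform bound~\eqref{eq:u_bounded} in Proposition~\ref{prop:u_bounded}; consequently it is dominated by $\c{ubounded}$, uniformly in $h$, $k$, and $j \in \{1,\ldots,N\}$. I do not anticipate any technical obstacle here: the only substantive inputs are the homogeneous Dirichlet boundary condition baked into $\SS^1_0(\TT_h)$, which makes both Poincar\'e and Korn (of the first kind) directly applicable without a zero-mean correction, together with the previously established bound of Proposition~\ref{prop:u_bounded}.
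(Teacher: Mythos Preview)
Your proposal is correct and follows exactly the same approach as the paper: apply Poincar\'e termwise (using $\uuu_h^\ell\in\SS^1_0(\TT_h)\subset H^1_0(\Omega)$), then Korn's first inequality to pass from $\norm{\nabla\cdot}{\L^2(\Omega)}$ to $\norm{\eeps(\cdot)}{\L^2(\Omega)}$, and finally invoke Proposition~\ref{prop:u_bounded}. In fact the corollary is stated with the proof inline and closed by \qed, so there is no separate argument in the paper beyond what you have written; your only addition is the (correct) observation that the Korn constant must be absorbed into $\c{poincare}$, which the paper glosses over.
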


Proposition~\ref{prop:u_bounded} now immediately yields boundedness of the discrete magnetizations.

\begin{corollary}\label{cor:m_bounded}
For any $j = 1, \hdots, N$, there holds
\begin{align}\label{eq:m_bounded}
\norm{\nabla \mmm_h^j}{\L^2(\Omega)}^2 + k \sum_{\ell=0}^{j-1}\norm{\vvv_h^\ell}{\L^2(\Omega)}^2+ \big(\theta - \frac12\big)k^2\sum_{\ell=0}^{j-1}\norm{\nabla \vvv_h^\ell}{\L^2(\Omega)}^2 \le \c{mbounded}
\end{align}
for some constant $\setc{mbounded}>0$ which depends only on $\c{vbounded1}, \c{vbounded2}$, and $\c{ubounded}$.
\end{corollary}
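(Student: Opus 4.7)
The proof plan is essentially to combine Lemma~\ref{lem:kvhj_bounded} and Proposition~\ref{prop:u_bounded}, since the left-hand side of~\eqref{eq:m_bounded} is precisely what Lemma~\ref{lem:kvhj_bounded} controls, while the only remaining data-dependent quantity on the right-hand side of that lemma, namely $k \sum_{\ell=0}^{j-1}\norm{\eeps(\uuu_h^\ell)}{\L^2(\Omega)}^2$, is what Proposition~\ref{prop:u_bounded} bounds uniformly.

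More concretely, I would start by invoking Lemma~\ref{lem:kvhj_bounded} to obtain
\begin{align*}
\norm{\nabla \mmm_h^j}{\L^2(\Omega)}^2 + k\sum_{\ell=0}^{j-1}\norm{\vvv_h^\ell}{\L^2(\Omega)}^2 + (\theta-\tfrac12)k^2\sum_{\ell=0}^{j-1}\norm{\nabla \vvv_h^\ell}{\L^2(\Omega)}^2 \lesssim \norm{\nabla \mmm_h^0}{\L^2(\Omega)}^2 + k\sum_{\ell=0}^{j-1}\norm{\eeps(\uuu_h^\ell)}{\L^2(\Omega)}^2 + 1,
\end{align*}
with hidden constants depending only on $\c{vbounded1}$ and $\c{vbounded2}$. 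The first term on the right-hand side is uniformly bounded because the assumed weak convergence $\mmm_h^0 \weakto \mmm^0$ in $\H^1(\Omega)$ entails uniform boundedness of the sequence $\{\mmm_h^0\}_h$ in the $\H^1(\Omega)$-norm. For the sum, Proposition~\ref{prop:u_bounded} provides the $h$- and $k$-independent bound $\norm{\eeps(\uuu_h^\ell)}{\L^2(\Omega)}^2 \le \c{ubounded}$ for every $\ell = 1,\dots, N$, while $\norm{\eeps(\uuu_h^0)}{\L^2(\Omega)}^2$ is controlled by the weak convergence $\uuu_h^0 \weakto \uuu^0$ in $\H^1(\Omega)$. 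Summing these bounds and using $k(j-1)\le kN = T$, we obtain
\begin{align*}
k\sum_{\ell=0}^{j-1}\norm{\eeps(\uuu_h^\ell)}{\L^2(\Omega)}^2 \le T\,\c{ubounded} + O(k),
\end{align*}
which is uniformly bounded. Combining these estimates yields~\eqref{eq:m_bounded} with a constant depending only on the asserted constants.

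There is no real obstacle here: all the work has already been done in the preceding lemma and proposition, and this corollary is essentially a bookkeeping step that packages the bound on $\mmm_h$ and $\vvv_h$ in a form that no longer references the displacement field. The only point requiring a moment of care is ensuring that $\norm{\nabla \mmm_h^0}{\L^2(\Omega)}^2$ and $\norm{\eeps(\uuu_h^0)}{\L^2(\Omega)}^2$ enter as $h$-independent constants, which is exactly what the weak convergence hypotheses on the initial data in Theorem~\ref{thm:convergence}\,(a) guarantee.
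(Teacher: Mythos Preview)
Your proposal is correct and matches the paper's proof essentially line for line: invoke Lemma~\ref{lem:kvhj_bounded}, bound $k\sum_{\ell=0}^{j-1}\norm{\eeps(\uuu_h^\ell)}{\L^2(\Omega)}^2$ via Proposition~\ref{prop:u_bounded} together with $kj\le T$, and absorb $\norm{\nabla \mmm_h^0}{\L^2(\Omega)}^2$ using the assumed convergence of the initial data. Your separate treatment of the $\ell=0$ strain term is a touch more careful than the paper, which simply writes $k\sum_{\ell=0}^{j-1}\norm{\eeps(\uuu_h^\ell)}{\L^2(\Omega)}^2 \le |T|\c{ubounded}$, but the argument is the same.
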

\begin{proof}
From Lemma~\ref{lem:kvhj_bounded}, we get
\begin{align}
\begin{split}
\norm{\nabla \mmm_h^j}{\L^2(\Omega)}^2 + (\theta-\frac12)k^2\sum_{\ell=0}^{j-1}&\norm{\nabla \vvv_h^\ell}{\L^2(\Omega)}^2+ k\sum_{\ell=0}^{j-1}\norm{\vvv_h^\ell}{\L^2(\Omega)}^2\\
 &\le \c{vbounded1} \big(\norm{\nabla \mmm_h^0}{\L^2(\Omega)}^2 + k \sum_{\ell=0}^{j-1}\norm{\eeps(\uuu_h^\ell)}{\L^2(\Omega)}^2 +\c{vbounded2}\big),
\end{split}
\end{align}
By utilizing Proposition~\ref{prop:u_bounded}, we see
$k \sum_{\ell=0}^{j-1}\norm{\eeps(\uuu_h^\ell)}{\L^2(\Omega)}^2 \le |T|\c{ubounded}.$
The uniform boundedness of $\norm{\nabla \mmm_h^0}{\L^2(\Omega)}$ concludes the proof.
\end{proof}

Next, we deduce the existence of convergent subsequences.
\begin{lemma}\label{lem:subsequences}
Let $1/2 \le  \theta \le 1$. Then, there exist functions $(\mmm, \uuu, \vvvv) \in \H^1(\Omega_T, \SSS^2)\times \H^1(\Omega_T) \times \L^2(\Omega_T)$ such that
\begin{subequations}
\begin{align}
& \mmm_{hk} \rightharpoonup \mmm \text{ in } \H^1(\Omega_T)\label{eq:subsequences1b}\\
&\mmm_{hk}, \mmm_{hk}^\pm \rightharpoonup \mmm \text{ in } L^2(\H^1),\label{eq:subsequences1a}\\
&\mmm_{hk}, \mmm_{hk}^\pm \rightarrow \mmm \text{ in } \L^2(\Omega_T)\label{eq:subsequences2}\\
&\mmm_{hk}, \mmm_{hk}^\pm \to \mmm \text{ pointwise almost everywhere in } \Omega_T, \label{eq:subsequences2b}\\
&\uuu_{hk} \rightharpoonup \uuu \text{ in } \H^1(\Omega_T)\label{eq:subsequences3b}\\
&\uuu_{hk}, \uuu_{hk}^\pm  \rightharpoonup \uuu \text{ in } L^2(\H^1),\label{eq:subsequences3a}\\
&\uuu_{hk}, \uuu_{hk}^\pm \rightarrow \uuu \text{ in } \L^2(\Omega_T)\label{eq:subsequences4}\\
&\wvvvv, \wvvvv^\pm \rightharpoonup \vvvv \text{ in } \L^2(\Omega_T)\label{eq:subsequences5}.
\end{align}
\end{subequations}
Here, the convergence is to be understood for a subsequence of the corresponding sequences which is successively constructed, i.e.\ for arbitrary spatial mesh-size $h \to 0$ and timestep-size $k \to 0$, there exist subindices $h_n, k_n$, for which the above convergence properties are satisfied simultaneously.
In addition, there exists some $\vvv \in \L^2(\Omega_\tau)$ with 
\begin{align}
\vvv_{hk}^- \rightharpoonup \vvv \text{ in } \L^2(\Omega_T)
\end{align}
again for the same subsequence as above.
\end{lemma}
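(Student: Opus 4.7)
The plan is to extract weakly convergent subsequences from the a~priori bounds already established, to upgrade weak to strong $\L^2$-convergence by the Rellich--Kondrachov compactness embedding, and finally to verify the modulus constraint $|\mmm|=1$ almost everywhere. First, I would turn the discrete estimates into uniform bounds on the interpolants: Corollary~\ref{cor:m_bounded} together with $\norm{\dt\mmm_h^{\ell+1}}{\L^2(\Omega)}\le\norm{\vvv_h^\ell}{\L^2(\Omega)}$ and the identity $\partial_t\mmm_{hk}|_{[t_\ell,t_{\ell+1})}=\dt\mmm_h^{\ell+1}$ yields $\mmm_{hk}$ bounded in $\H^1(\Omega_T)$ and $\mmm_{hk}^\pm$ bounded in $L^\infty(0,T;\H^1(\Omega))$; Proposition~\ref{prop:u_bounded}, Corollary~\ref{cor:u_l2_bounded} and Korn's inequality yield $\uuu_{hk},\uuu_{hk}^\pm$ bounded in $L^\infty(0,T;\H^1_0(\Omega))$; $\partial_t\uuu_{hk}=\wvvvv^+$ is bounded in $\L^2(\Omega_T)$, so $\uuu_{hk}$ is uniformly bounded in $\H^1(\Omega_T)$; and $\norm{\vvv_{hk}^-}{\L^2(\Omega_T)}^2=k\sum_\ell\norm{\vvv_h^\ell}{\L^2(\Omega)}^2\lesssim 1$, while $\wvvvv^\pm$ is bounded in $L^\infty(0,T;\L^2(\Omega))\subset\L^2(\Omega_T)$.

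Armed with these bounds, Banach--Alaoglu in the reflexive spaces $\H^1(\Omega_T)$, $L^2(0,T;\H^1(\Omega))$, and $\L^2(\Omega_T)$, combined with a standard diagonal extraction over countable null sequences $h_n\downarrow 0$, $k_n\downarrow 0$, provides a single subsequence realising \eqref{eq:subsequences1b}, \eqref{eq:subsequences1a}, \eqref{eq:subsequences3b}, \eqref{eq:subsequences3a}, \eqref{eq:subsequences5}, and the weak convergence $\vvv_{hk}^-\rightharpoonup\vvv$ in $\L^2(\Omega_T)$. The compact embedding $\H^1(\Omega_T)\hookrightarrow\hookrightarrow\L^2(\Omega_T)$ then turns weak into strong convergence of $\mmm_{hk}$ and $\uuu_{hk}$, and one more subsequence gives pointwise a.e.\ convergence. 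To propagate these facts to the piecewise constant siblings I would use
\begin{equation*}
\norm{\mmm_{hk}^\pm-\mmm_{hk}}{\L^2(\Omega_T)}^2\le k^2\norm{\vvv_{hk}^-}{\L^2(\Omega_T)}^2\lesssim k^2,
\quad
\norm{\uuu_{hk}^\pm-\uuu_{hk}}{\L^2(\Omega_T)}^2\le k^2\norm{\wvvvv^\pm}{\L^2(\Omega_T)}^2\lesssim k^2,
\end{equation*}
along with the analogous $\norm{\wvvvv^\pm-\wvvvv}{\L^2(\Omega_T)}^2\lesssim k\sum_\ell\norm{\dt\uuu_h^{\ell+1}-\dt\uuu_h^\ell}{\L^2(\Omega)}^2\to 0$, so that the weak and strong limits match and the remaining items~\eqref{eq:subsequences2}, \eqref{eq:subsequences2b}, \eqref{eq:subsequences4} are immediate.

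The only delicate part is checking that the limit $\mmm$ belongs to $\SSS^2$ a.e., i.e., $|\mmm|=1$ a.e. By barycentric convexity applied to $\mmm_h^\ell\in\MM_h$ (Lemma~\ref{lem:solve}) one has $|\mmm_{hk}^-(t,\xxx)|\le 1$ a.e.\ in $\Omega_T$, so pointwise passage to the limit gives $|\mmm|\le 1$. For the reverse inequality I would invoke the standard interpolation identity $\norm{\,|\pphi_h|^2-1\,}{\L^1(\Omega)}\lesssim h^2\norm{\nabla\pphi_h}{\L^2(\Omega)}^2$ valid for $\pphi_h\in\MM_h$ (an element-wise computation using $|\pphi_h(\zzz)|=1$ at the nodes, cf.~\cite{bartels}), apply it to $\mmm_{hk}^-(t,\cdot)$, integrate in $t$, and use the $L^\infty(0,T;\L^2)$ bound on $\nabla\mmm_{hk}^-$ from Corollary~\ref{cor:m_bounded} to conclude $\norm{\,|\mmm_{hk}^-|^2-1\,}{\L^1(\Omega_T)}\lesssim h^2\to 0$. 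The main obstacle is therefore not any individual compactness argument---each is textbook---but rather the bookkeeping of a single diagonal subsequence that realises all listed convergences simultaneously, together with the modulus-preservation step in the limit.
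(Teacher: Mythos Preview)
Your proposal is correct and follows essentially the same route as the paper: uniform a~priori bounds from Proposition~\ref{prop:u_bounded} and Corollaries~\ref{cor:u_l2_bounded}--\ref{cor:m_bounded}, successive extraction of weakly convergent subsequences via reflexivity and the compact embedding $\H^1(\Omega_T)\Subset\L^2(\Omega_T)$, identification of the limits of $\wgamma$ and $\wgamma^\pm$ through the $O(k)$ difference estimates, and finally the modulus constraint via a nodal interpolation bound of the form $\norm{|\mmm_{hk}^-(t,\cdot)|-1}{\L^2(\Omega)}\lesssim h\,\norm{\nabla\mmm_h^\ell}{\L^2(\Omega)}$. The only cosmetic differences are that the paper phrases the difference estimates as $k\sum_\ell\norm{\gamma_h^{\ell+1}-\gamma_h^\ell}{\L^2(\Omega)}^2\to 0$ (using the summed increment bounds directly) rather than your equivalent $k^2$-rate formulation, and it states the interpolation estimate in the $\L^2$ rather than the squared $\L^1$ form.
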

\begin{proof}
From Proposition~\ref{prop:u_bounded}, Corollary~\ref{cor:u_l2_bounded}, and Corollary~\ref{cor:m_bounded}, we immediately get boundedness of all of those sequences. A compactness argument thus allows to successively extract convergent subsequences. Therefore, it only remains to show, that the corresponding limits coincide, i.e.\
\begin{align*}
\lim \wgamma = \lim \wgamma^- = \lim \wgamma^+ \quad \text{ with } \gamma_{hk} \in \{\mmm_{hk}, \uuu_{hk}, \vvvv_{hk}\}.
\end{align*}
To see this, we make use of the boundedness of two subsequent solutions. From Proposition~\ref{prop:u_bounded}, Corollary~\ref{cor:u_l2_bounded}, and Corollary~\ref{cor:m_bounded}, we see that
\begin{align*}
\sum_{\ell = 0}^{j-1}\norm{\mmm_h^{\ell+1} - \mmm_h^\ell}{\L^2(\Omega)}^2 &+ 
\sum_{\ell=0}^{j-1}\norm{\uuu_h^{\ell+1} - \uuu_h^\ell}{\L^2(\Omega)}^2\\
&\quad + \sum_{\ell=0}^{j-1}\norm{\eeps(\uuu_h^{\ell+1})-\eeps(\uuu_h^\ell)}{\L^2(\Omega)}^2 +  \sum_{\ell=0}^{j-1}\norm{\dt(\uuu_h^{\ell+1})-\dt(\uuu_h^\ell)}{\L^2(\Omega)}^2 
\end{align*}
is uniformly bounded. For the first sum, we used the inequality
\begin{align*}
\norm{\mmm_h^{\ell+1} - \mmm_h^\ell}{\L^2(\Omega)}^2 \le k^2\norm{\vvv_h^\ell}{\L^2(\Omega)}^2,
\end{align*}
see e.g.~\cite{alouges2008, petra}.
We thus get
\begin{align*}
\norm{\wgamma - \wgamma^-}{\L^2(\Omega_T)}^2 &=\sum_{\ell=0}^{N-1} \int_{t_\ell}^{t_{\ell+1}}\norm{\gamma_h^\ell + \frac{t-t_\ell}{k}(\gamma_h^{\ell+1} - \gamma_h^\ell) - \gamma_h^\ell}{\L^2(\Omega)}^2\\
&\le k\sum_{\ell=0}^{N-1}\norm{\gamma_h^{\ell+1}-\gamma_h^\ell}{\L^2(\Omega)}^2 \xrightarrow{h \to 0} 0,
\end{align*}
and analogously
\begin{align*}
\norm{\wgamma - \wgamma^+}{\L^2(\Omega_T)}^2 &=\sum_{\ell=0}^{N-1}\int_{t_\ell}^{t_{\ell+1}}\norm{\gamma_h^\ell + \frac{t-t_\ell}{k}(\gamma_h^{\ell+1} - \gamma_h^\ell) - \gamma_h^{\ell+1}}{\L^2(\Omega)}^2\\
&\le \sum_{\ell=0}^{N-1}\int_{t_\ell}^{t_{\ell+1}}2\norm{\gamma_h^{\ell+1} - \gamma_h^\ell}{\L^2(\Omega)}^2\\
&\le2k\sum_{\ell=0}^{N-1}\norm{\gamma_h^{\ell+1} - \gamma_h^\ell}{\L^2(\Omega)}^2 \xrightarrow{h \to 0} 0.
\end{align*}
We thus conclude that the limits $\lim \wgamma = \lim \wgamma^\pm$ coincide in $\L^2(\Omega_T)$. From the continuous inclusions $\H^1(\Omega_T) \Subset L^2(\H^1) \subseteq \L^2(\Omega_T)$ and the uniqueness of weak limits, we even conclude the convergence properties in $L^2(\H^1)$, resp.\ in $\H^1(\Omega_T)$. By use of the Weyl theorem, we may extract yet another subsequence to see pointwise convergence, i.e.~\eqref{eq:subsequences2b}. From
\begin{align*}
&\norm{|\mmm|-1}{\L^2(\Omega_T)}\le \norm{|\mmm|-|\mmm_{hk}^-|}{\L^2(\Omega_T)} + \norm{|\mmm_{hk}^-|-1}{\L^2(\Omega_T)}, \quad \text{and}\\
&\norm{|\mmm_{hk}^-(t, \cdot)|-1}{\L^2(\Omega)} \le h \max_{t_j}\norm{\nabla \mmm_h^\ell}{\L^2(\Omega)} \xrightarrow{h \to 0} 0,
\end{align*}
we finally conclude $|\mmm| = 1$ almost everywhere in $\Omega_T$, which is the desired result.
\end{proof}

In the remainder of this chapter, we will prove that the limiting tupel $(\mmm, \uuu)$ is indeed a weak solution  in the sense of Definition~\ref{def:weak_sol}. First, we identify the limit function $\vvv$ with the time derivative of $\mmm$. The following result can be found e.g.\ in~\cite{alouges2008}.

\begin{lemma}\label{lem:v=dtm}
The limit function $\vvv \in \L^2(\Omega_T)$ equals the time derivative of $\mmm$, i.e.\ $\vvv = \partial_t \mmm$ almost everywhere in $\Omega_T$. \qed
\end{lemma}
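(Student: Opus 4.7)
The plan is to show that the weak $\L^2(\Omega_T)$-limits $\vvv$ of $\vvv_{hk}^-$ and $\partial_t \mmm$ of $\mmm_{hk}$ coincide. Since $\mmm_{hk} \rightharpoonup \mmm$ in $\H^1(\Omega_T)$ by Lemma~\ref{lem:subsequences}, we already have $\partial_t \mmm_{hk} \rightharpoonup \partial_t \mmm$ in $\L^2(\Omega_T)$. By uniqueness of weak limits, it therefore suffices to prove $\partial_t \mmm_{hk} - \vvv_{hk}^- \rightharpoonup 0$ in $\L^2(\Omega_T)$, i.e.\
\begin{equation*}
\int_{\Omega_T}(\partial_t \mmm_{hk} - \vvv_{hk}^-)\cdot \pphi \,\to\, 0 \qquad \text{for every } \pphi \in C^\infty_c(\Omega_T).
\end{equation*}

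First I would note that on each time slab $(t_\ell, t_{\ell+1})$ the piecewise-affine function $\mmm_{hk}$ has time derivative $\partial_t \mmm_{hk} = \dt \mmm_h^{\ell+1} = (\mmm_h^{\ell+1}-\mmm_h^\ell)/k$. Combining this with step~(ii) of Algorithm~\ref{alg} and the Pythagoras identity $|\mmm_h^\ell(\zzz)+k\vvv_h^\ell(\zzz)|^2 = 1 + k^2|\vvv_h^\ell(\zzz)|^2$ from the proof of Lemma~\ref{lem:solve}, a short computation shows that, at every node $\zzz \in \NN_h$,
\begin{equation*}
\dt \mmm_h^{\ell+1}(\zzz) - \vvv_h^\ell(\zzz) = \frac{1-a_\ell(\zzz)}{k\,a_\ell(\zzz)}\bigl(\mmm_h^\ell(\zzz) + k\vvv_h^\ell(\zzz)\bigr), \qquad a_\ell(\zzz):=\sqrt{1+k^2|\vvv_h^\ell(\zzz)|^2},
\end{equation*}
whose Euclidean modulus is dominated by $k|\vvv_h^\ell(\zzz)|^2$. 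Since the left-hand side belongs to $\SS^1(\TT_h)$ it is the nodal interpolant of itself, so this pointwise bound transfers into an $\ell^\infty$-bound at the nodes.

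Next I would test against $\pphi$. Replacing $\pphi$ on each slab by $\pphi^\ell := \pphi(t_\ell,\cdot)$ contributes an error which is $O(k)$ times $\|\pphi_t\|_{\L^\infty}$ and hence vanishes using the uniform $\L^2(\Omega_T)$-bound on $\vvv_{hk}^-$ from Corollary~\ref{cor:m_bounded}. For the remaining contribution I would apply the standard mass-lumping estimate for $\SS^1(\TT_h)$-functions against a smooth weight, giving
\begin{equation*}
\left|\int_\Omega \bigl(\dt \mmm_h^{\ell+1}-\vvv_h^\ell\bigr)\cdot \pphi^\ell\right| \lesssim \|\pphi^\ell\|_{\L^\infty}\sum_{\zzz\in\NN_h} h^3\, k\,|\vvv_h^\ell(\zzz)|^2 \,\lesssim\, k\,\|\pphi\|_{\L^\infty(\Omega_T)}\|\vvv_h^\ell\|_{\L^2(\Omega)}^2.
\end{equation*}
Summing $k$ times this estimate over $\ell=0,\dots,N-1$ and invoking the a-priori bound $k\sum_\ell \|\vvv_h^\ell\|_{\L^2(\Omega)}^2 \le \c{mbounded}$ from Corollary~\ref{cor:m_bounded}, we obtain $\int_{\Omega_T}(\partial_t\mmm_{hk}-\vvv_{hk}^-)\cdot \pphi = O(k) \to 0$, as required.

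The main obstacle is the nonlinear normalization in step~(ii): a naive attempt to prove $\partial_t\mmm_{hk}-\vvv_{hk}^- \to 0$ strongly in $\L^2(\Omega_T)$ runs into an unavoidable inverse estimate and forces a CFL-type coupling between $h$ and $k$. The weak testing argument sketched above circumvents this by pairing the $k$-small pointwise error with the already-controlled $\L^2$-mass of $\vvv_h^\ell$ before distributing derivatives onto the smooth $\pphi$, so unconditional convergence is preserved.
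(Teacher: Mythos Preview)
The paper does not give its own proof of this lemma; it simply states the result with a \qed and refers to~\cite{alouges2008}. Your argument is correct and is essentially the standard one from that reference: the nodal identity coming from the normalization step~(ii) yields $|\dt\mmm_h^{\ell+1}(\zzz)-\vvv_h^\ell(\zzz)|\le \tfrac{k}{2}|\vvv_h^\ell(\zzz)|^2$, and combining this with the discrete $\ell^2$--$L^2$ norm equivalence on a quasi-uniform mesh plus the a~priori bound $k\sum_\ell\|\vvv_h^\ell\|_{\L^2(\Omega)}^2\le\c{mbounded}$ gives the desired convergence.

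One minor remark: your weak-testing detour (replacing $\pphi$ by $\pphi^\ell$, then mass-lumping) is more elaborate than necessary. The same nodal bound and norm equivalence give directly
\[
\|\partial_t\mmm_{hk}-\vvv_{hk}^-\|_{L^1(\Omega_T)}
=\sum_{\ell}k\,\|\dt\mmm_h^{\ell+1}-\vvv_h^\ell\|_{L^1(\Omega)}
\lesssim k\sum_\ell k\,\|\vvv_h^\ell\|_{\L^2(\Omega)}^2
\le \c{mbounded}\,k \to 0,
\]
i.e.\ strong $L^1(\Omega_T)$-convergence to zero. Since $\partial_t\mmm_{hk}-\vvv_{hk}^-$ is bounded in $\L^2(\Omega_T)$, this already implies weak $\L^2$-convergence to zero, and hence $\vvv=\partial_t\mmm$. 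This is the usual phrasing in~\cite{alouges2008,petra}; your version is equivalent but slightly longer. Your closing comment that a naive strong-$\L^2$ attempt would force a CFL coupling is accurate and explains why the $L^1$ route (or your weak-testing variant) is the right one for unconditional convergence.
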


We have now collected all ingredients for the proof of our main theorem.

\emph{Proof of Theorem~\ref{thm:convergence}.}
Let $(\zzeta, \ppsi) \in C^\infty(\Omega_T) \times C_c^\infty\big([0,T);C^\infty(\Omega)\big)$ be arbitrary. We  define testfunctions by $(\vphi_h, \ppsi_h)(t, \cdot) := \big(\II_h(\mmm_{hk}^-\times \zzeta), \II_h\ppsi, \big)(t,\cdot)$. With the notation from above, we integrate equation~\eqref{eq:alg1} in time to obtain
\begin{align*}
\alpha \int_0^T (\vvv_{hk}^-, \vphi_h) + \int_0^T\big((\mmm_{hk}^- &\times \vvv_{hk}^-), \vphi_h\big) = -C_e\int_0^T\big(\nabla(\mmm_{hk}^- + \theta k \vvv_{hk}^-), \nabla \vphi_h)\big)\\
&\quad + \int_0^T \big(\hm(\uuu_{hk}^-, \mmm_{hk}^-), \vphi_h\big) - \int_0^T\big(\operator(\mmm_{hk}^-), \vphi_h\big).
\end{align*}
Then the magnetostrictive component is again given by
\begin{align*}
[\hm(\uuu_{hk}^-, \mmm_{hk}^-)]_\ell := \sum_{i,j,p}\lambda^m_{ijpq}\sigma^{hk}{ij}(\mmm_{hk}^-)_p, \quad \text{ with } \quad
\ssigma^{hk} = \llambda^e\big(\eeps(\uuu_{hk}^-) - \eeps^m(\mmm_{hk}^-)\big).
\end{align*}
The definition $\vphi_h (t, \cdot) := \II_h(\mmm_{hk}^- \times \zzeta)(t, \cdot)$ and the approximation properties of the nodal interpolation operator show
\begin{align*}
\int_0^T \big((\alpha \vvv_{hk}^- + \mmm_{hk}^- \times\vvv_{hk}^-),&(\mmm_{hk}^- \times \zzeta)\big) + k\,\theta\int_0^T \big(\nabla \vvv_{hk}^-, \nabla(\mmm_{hk}^- \times \zzeta)\big)\\
&\quad+ C_e\int_0^T\big(\nabla \mmm_{hk}^-, \nabla(\mmm_{hk}^- \times \zzeta)\big)\\
&\quad-\int_0^T \big(\hm(\uuu_{hk}^-, \mmm_{hk}^-), (\mmm_{hk}^-\times \zzeta)\big)\\
&\quad + \int_0^T \big(\operator(\mmm_{hk}^-), (\mmm_{hk}^-\times \zzeta)\big)\\
&=\mathcal{O}(h).
\end{align*}
Passing to the limit and using the strong $\L^2(\Omega_T)$-convergence of $(\mmm_{hk}^-\times \zzeta)$ towards $(\mmm \times \zzeta)$, we get
\begin{align*}
\int_0^T \big((\alpha \vvv_{hk}^- + \mmm_{hk}^- \times\vvv_{hk}^-),(\mmm_{hk}^- \times \zzeta)\big) &\longrightarrow \int_0^T \big((\alpha \mmm_t + \mmm \times\mmm_t),(\mmm \times \zzeta)\big),\\
k\,\theta\int_0^T \big(\nabla \vvv_{hk}^-, \nabla(\mmm_{hk}^- \times \zzeta)\big) &\longrightarrow 0, \quad \text{ and }\\\int_0^T\big(\nabla \mmm_{hk}^-, \nabla(\mmm_{hk}^- \times \zzeta)\big) &\longrightarrow \int_0^T\big(\nabla \mmm, \nabla(\mmm \times \zzeta)\big),
\end{align*}
as $(h,k)Ê\rightarrow (0,0)$, cf.~\cite[Proof of Thm.\ 2]{alouges2008}. Here, we have used the boundedness of $k\norm{\nabla \vvv_{hk}^-}{\L^2(\Omega_t)}^2$ which follows from $\theta \in (1/2,1]$, see Corollary~\ref{cor:m_bounded}. Next, the weak convergence of $\operator(\mmm_{hk}^-)$ from~\eqref{assumption:convergence} yields
\begin{align*}
\int_0^T \big(\operator(\mmm_{hk}^-), (\mmm_{hk}^-\times \zzeta)\big) &\longrightarrow \int_0^T \big(\operator(\mmm), (\mmm\times \zzeta)\big).
\end{align*}
As for the magnetostrictive component, we have to show 
\begin{align*}
\hm(\uuu_{hk}^-, \mmm_{hk}^-) \rightharpoonup \hm(\uuu, \mmm) \quad \text{ in } \L^2(\Omega_T),
\end{align*}
where it obviously suffices to show the desired property componentwise.
With the definition from~\eqref{eq:magnetostriction5} and the boundedness of $\llambda^m$, a direct computation proves
\begin{align*}
\norm{\eeps^m(\mmm_{hk}^-) - \eeps^m(\mmm)}{\L^2(\Omega_T)}^2 \lesssim \norm{\mmm_{hk}^- - \mmm}{\L^2(\Omega_T)}^2,
\end{align*}
cf.\ \cite{diss}. The pointwise convergence of $\mmm_{hk}^-$ from~\eqref{eq:subsequences2b}  in combination with Lebesgue's dominated convergence theorem, now yield the strong convergence $\mmm_{hk}^- \cdot \zzeta \to \mmm \cdot \zzeta$. For any indices $i,j,p = 1,2,3$ this shows
\begin{align*}
\big((\eeps^m(\mmm_{hk}^-))_{ij}(\mmm_{hk}^-)_p, \zzeta_p\big) \to \big(\eeps^m(\mmm)_{ij}\mmm_p, \zzeta_p\big).
\end{align*} 
By definition of the magnetostrictive component it therefore only remains to show
\begin{align*}
(\eeps(\uuu_{hk}^-))_{ij} (\mmm_{hk}^-)_p \rightharpoonup \eeps(\uuu)_{ij}\cdot \mmm_p \quad \text{ in } \L^2(\Omega_T)
\end{align*}
for any combination $i,j,p = 1,2,3$ of indices.
Analogously to above, this can be seen by
\begin{align*}
\big((\eeps(\uuu_{hk}^-))_{ij} (\mmm_{hk}^-)_p, \zzeta_p \big) = \big(\eeps(\uuu_{hk}^-)_{ij}, (\mmm_{hk}^-)_p\zzeta_p\big) \rightarrow \big(\eeps(\uuu)_{ij}, \mmm_p \zzeta_p\big) = \big(\eeps(\uuu)_{ij}\mmm_p, \zzeta_p\big)
\end{align*}
for all $\zzeta \in C^\infty(\Omega_T)$, where the convergence of $\eeps(\uuu_{hk}^-)$ towards $\eeps(\uuu)$ particularly follows from the convergence of $\uuu_{hk}^-$ towards $\uuu$ in $L^2(\H^1)$. So far, we have thus proved
\begin{align*}
\int_0^T \big((\alpha \mmm_t + \mmm \times \mmm_t), (\mmm \times \zzeta)\big) &= -C_e \int_0^T \big(\nabla \mmm, \nabla (\mmm \times \zzeta)\big)\\
&\quad + \int_0^T \big( \hm(\uuu, \mmm), (\mmm \times \zzeta)\big) - \int_0^T \big(\operator(\mmm),(\mmm \times \zzeta)\big).
\end{align*}
Proceeding as in~\cite{maxwell}, we conclude~\eqref{eq:weak_sol1} by use of elementary pointwise calculations. The equality $\mmm(0,\cdot) = \mmm^0$ in the trace sense follows from the weak convergence $\mmm_{hk} \weakto \mmm$ in $\H^1(\Omega_T)$ and thus weak convergence of the traces. The equality $\uuu(0,\cdot) = \uuu^0$ follows analogously.

To prove~\eqref{eq:weak_sol2}, we argue similarly. From~\eqref{eq:alg2}, we obtain
\begin{align*}
&\int_0^T \big((\wvvvv)_t, \ppsi_h\big) + \int_0^T\big(\llambda^e\eeps(\uuu_{hk}^+), \eeps(\ppsi_h)\big) = \int_0^T \big(\llambda^e\eeps^m(\mmm_{hk}^+), \eeps(\ppsi_h)\big).
\end{align*}
For the first summand on the left-hand side, we perform integration by parts in time and, due to the shape of $\ppsi_h$, get
\begin{align*}
\int_0^T \big((\wvvvv)_t, \ppsi_h\big) = -\int_0^T\big(\wvvvv, (\ppsi_h)_t\big) + \underbrace{\big(\wvvvv(T,\cdot), \ppsi_h(T, \cdot)\big)}_{=0} - \big(\underbrace{\wvvvv(0,\cdot)}_{= \vvvv_h^0}, \ppsi_h(0,\cdot)\big).
\end{align*}
Passing to the limit $(h,k) \to 0$, we see
\begin{align*}
\int_0^T \big((\wvvvv)_t, \ppsi_h\big) \longrightarrow -\int_0^T\big(\vvvv, \ppsi_t\big) - \big(\vvvv(0,\cdot), \ppsi(0,\cdot)\big).
\end{align*}
Here, we have used the assumed convergence of the initial data. From~\eqref{eq:subsequences5} and the definition of $\vvvv_{hk}^+$, we get $\wvvvv^+ = \partial_t \uuu_{hk}$, and therefore, by use of weak lower semi-continuity, conclude
\begin{align*}
\norm{\vvvv - \partial_t \uuu}{\L^2(\Omega)}^2 \le \liminf \norm{\wvvvv^+ - \partial_t \uuu_{hk}}{\L^2(\Omega)}^2 = 0
\end{align*}
whence $\vvvv = \partial_t \uuu$ almost everywhere in $\Omega_T$.
The convergence of the terms
\begin{align*}
 \int_0^T\big(\llambda^e\eeps(\uuu_{hk}^+), \eeps(\ppsi_h)\big) & \longrightarrow  \int_0^T\big(\llambda^e\eeps(\uuu), \eeps(\ppsi)\big) \text{ and }\\
\int_0^T \big(\llambda^e\eeps^m(\mmm_{hk}^+), \eeps(\ppsi_h)\big) & \longrightarrow \int_0^T \big(\llambda^e\eeps^m(\mmm), \eeps(\ppsi)\big)
\end{align*}
is straightforward. In summary, we have thus shown~\eqref{eq:weak_sol2}. 


It remains to show the energy estimate~\eqref{eq:energy}. From the discrete energy estimates~\eqref{eq:u_bounded} and~\eqref{eq:m_bounded}, in combination with Korn's inequality, we get for any $t' \in [0,T]$ with $t' \in [t_\ell, t_{\ell+1})$
\begin{align*}
&\norm{\nabla \mmm_{hk}^+(t')}{\L^2(\Omega)}^2 +  \norm{\vvv_{hk}^-}{\L^2(\Omega_{t'})}^2 + \norm{\nabla \uuu_{hk}^+(t')}{\L^2(\Omega)}^2+\norm{\wvvvv^+(t')}{\L^2(\Omega)}^2\\
&=\norm{\nabla \mmm_{hk}^+(t')}{\L^2(\Omega)}^2 + \int_0^{t'} \norm{\vvv_{hk}^-(t)}{\L^2(\Omega)}^2 + \norm{\nabla \uuu_{hk}^+(t')}{\L^2(\Omega)}^2+ \norm{\wvvvv^+(t')}{\L^2(\Omega)}^2\\
&\le \norm{\nabla \mmm_{hk}^+(t')}{\L^2(\Omega)}^2 + \int_0^{t_{\ell+1}} \norm{\vvv_{hk}^-(t)}{\L^2(\Omega)}^2 +\norm{\nabla \uuu_{hk}^+(t')}{\L^2(\Omega)}^2+ \norm{\wvvvv^+(t')}{\L^2(\Omega)}^2 \\
&\le \widetilde C,
\end{align*}
for some constant $\widetilde C$ which is independent of $h$ and $k$.
Integration in time thus yields for any measurable set $\TTTT \subseteq [0,T]$
\begin{align*}
\int_{\TTTT}\norm{\nabla \mmm_{hk}^+(t')}{\L^2(\Omega)}^2 + \int_{\TTTT}\norm{\vvv_{hk}^-}{\L^2(\Omega_{t'})}^2 + \int_{\TTTT}\norm{\nabla \uuu_{hk}^+(t')}{\L^2(\Omega)}^2 + \int_{\TTTT}\norm{\wvvvv^+(t')}{\L^2(\Omega)}^2 \le \int_\TTTT \widetilde C.
\end{align*}
Weak semi-continuity now shows
\begin{align*}
&\int_{\TTTT}\norm{\nabla \mmm}{\L^2(\Omega)}^2 + \int_{\TTTT}\norm{\mmm_t}{\L^2(\Omega_{t'})}^2 + \int_{\TTTT}\norm{\nabla \uuu}{\L^2(\Omega)}^2 + \int_{\TTTT}\norm{\uuu_t}{\L^2(\Omega)}^2 \le \int_\TTTT \widetilde C,
\end{align*}
which concludes the proof.
\qed

\begin{remark}
Finally, we like to comment on the choice of $0 \le \theta\le 1/2$ to emphasize how variants of Theorem~\ref{thm:convergence} are read and proved.
\begin{enumerate}
\item For $0 \le \theta < 1/2$, one has to bound the term $k^2 \sum_{\ell=0}^{j-1} \norm{\nabla \vvv_h^\ell}{\L^2(\Omega)}^2$ in Corollary~\ref{cor:m_bounded} in order to prove boundedness of the discrete quantities. This can be achieved by using an inverse estimate $\norm{\nabla \vvv_h^\ell}{\L^2(\Omega)}^2 \lesssim \frac{1}{h^2} \norm{\vvv_h^\ell}{\L^2(\Omega)}^2$. The latter term can then be absorbed into the term $k \sum_{\ell = 0}^{j-1}\norm{\vvv_h^\ell}{\L^2(\Omega)}^2$ and thus yields convergence as $\frac{k}{h^2} \rightarrow 0$.
\item For the intermediate case $\theta = \frac12$, the limit
$k \theta \int_0^T \big(\nabla \vvv_{hk}^-, \nabla(\mmm_{hk}^- \times \zzeta)\big) \to 0$
is no longer valid, see~\cite[Proof of Thm.~2]{alouges2008}. As suggested in~\cite{alouges2008}, this can be circumvented by an inverse estimate provided that $\frac{k}{h} \to 0$.
\end{enumerate}
\end{remark}

\section{Numerical experiments}\label{sec:numerics}

In this section, we investigate the performance of the proposed Algorithm~\ref{alg} empirically. We compare Algorithm~\ref{alg} with the midpoint scheme from
\cite{bp, rochat} for the following blow-up benchmark example in 2D, proposed in \cite{bartels,bjp}: On $\Omega=(-0.5,0.5)^2$, we solve problem~\eqref{eq:problem_total}. For some fixed parameter $s \in \R$ and $ A := (1-2|\textbf{x}|)^4/s$ , the initial magnetization reads
%
\[
\textbf{m}_0(\textbf{x})= \left\{ \begin{array}{l l }
(0,0,-1) & \text{for } | \textbf{x} | \geq 0.5 \\[2mm]
\frac{(2\textbf{x}A, A^2 - | \textbf{x} |^2)}{(A^2 + | \textbf{x} |^2)} & \text{for } | \textbf{x} | \leq 0.5.
\end{array}\ \right.
\]
This choice of $\mmm_0$ is motivated in~\cite{bjp} in order to form some singularity at the center $\xxx = 0$.

The triangulation $\mathcal{T}_r$ used in the numerical simulation is defined through a positive integer $r$ and consists of $2^{2r+1}$ halved squares with edge length $h=2^{-r}$. Since this triangulation is of Delaunay type, the angle condition \eqref{assumption:bounded} is satisfied, see~\cite{alouges2008}.
For the magnetostriction, we took $\boldsymbol{\lambda}^e$ and  $\boldsymbol{\lambda}^m$ to be $2 \times 2$ tensors with
$
\boldsymbol{\lambda}^e=\lambda_{1111}^e=\lambda_{2222}^e=C^e, \quad \boldsymbol{\lambda}^m= \lambda_{1111}^m= \lambda_{2222}^m=C^m.
$
for given constants $C^e, C^m\ge0$.
For comparison, the midpoint scheme is computed as described in \cite{rochat}. The latter requires a fixed point iteration, where the tolerance is chosen as $\varepsilon=10^{-10}$, and a time-step $k_m >0$ that satisfies the associated mesh size condition $k_m \le Ch^2$ from~\cite{banas, bp, rochat}. In our computations for the midpoint scheme, we thus chose $k_m=h^2/10$. The time-step size of Algorithm~\ref{alg} is denoted by $k_a$ and does not depend on the spatial mesh-size, i.e.~as $h$ is decreased, $k_a$ remains fixed throughout the computations while $k_m$ needs to be stepwise decreased. The linear systems of Algorithm \ref{alg} are solved using an iterative method, and the constraint on the space $\KK_{\mmm_h^\ell}$ is incorporated via the Lagrange multiplier approach from~\cite{mathmod2012, petra}.  

\begin{table}[h]
\begin{center}
   \begin{tabular}{| c | c | c | c | c | c | c | c | c | }
     \hline
    \multicolumn{1}{|c|}{$\displaystyle$ $1/h$}   & \multicolumn{2}{|c|}{16} & \multicolumn{2}{|c|}{32} & \multicolumn{2}{|c|}{64} & \multicolumn{2}{|c|}{96} \\
       & CPU & $T_B$ & CPU & $T_B$  & CPU & $T_B$ & CPU & $T_B$ \\ \hline $\displaystyle$
    Alg \ref{alg} &  47.5&  0.036 & 291.2 & 0.024 & 2506 & 0.04 & 14157 & 0.059   \\ 
    Mid  & 136.5 & 0.038 &  1267 & 0.022 &  20371 & 0.032 & N.A.N & N.A.N  \\ \hline 
   \end{tabular}
   \caption{Comparison of CPU and blow-up time $T_B$ for Algorithm \ref{alg} and midpoint scheme \cite{bp} with $\alpha=1$, T=0.3[s], and $ C^e=C^m=0.$ \label{blowup}}
   \end{center}
\end{table}

\begin{figure}[h]
\begin{center}
\includegraphics[width=0.88\textwidth]{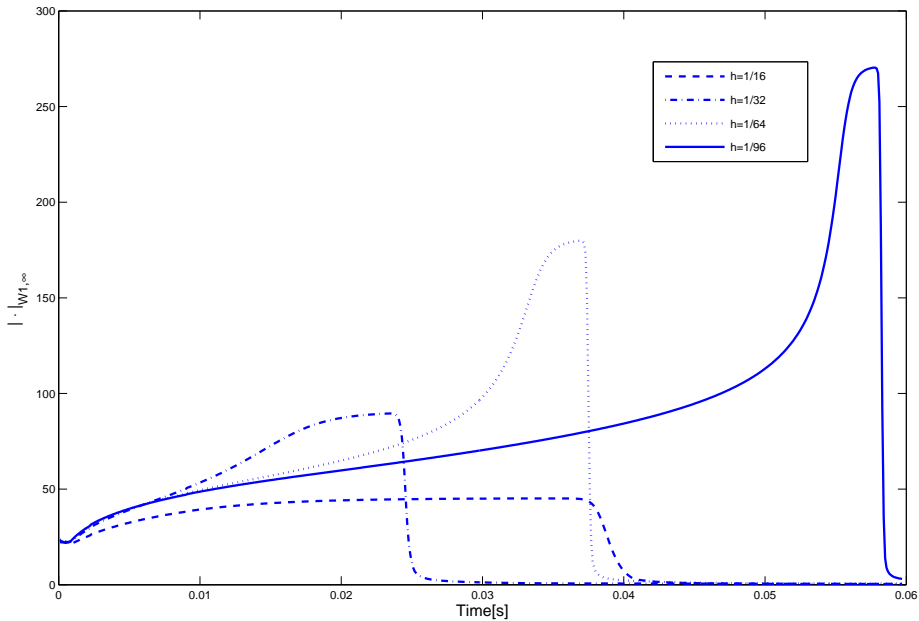}
\end{center}
\caption{ $W^{1, \infty}(\Omega)$ semi-norm from Algorithm \ref{alg} for different values of $h$, with $C^e=C^m=0$ r=5, s=4, $k_a=10^-5$. \label{energy2} }
\end{figure}

In a first experiment, we consider the exchange-only case of LLG and thus neglect magnetostrictive effects as well as all other field contributions, i.e.~$\operator(\cdot) = 0$. We compare the two algorithms for $\alpha=1, s=4, \theta=1, k_a=10^{-5}$, $C^e=C^m=0$, and $T=0.3[s]$. As initital value for $k_m$, we choose $10^{-5}$. In Table~\ref{blowup}, we investigate the overall computation time as well as the empirical blow-up time, which is the time when the $| \textbf{m}(t) |_{1, \infty} = \| \nabla \textbf{m}(t) \|_{L^{\infty}(\Omega) }$ reaches its maximum. We observe that Algorithm~\ref{alg} leads to significantly lower computation time than the midpoint scheme. Moreover, the blow-up time seems to vary between the different schemes. We observed that the blow-up times can be brought more in line with each other if the time-step size $k_a$ is a little decreased (not displayed). As expected, in comparison to the midpoint scheme, Algorithm~\ref{alg} can work with larger time-steps for reasonably small $h$.

\begin{figure}[h]
\begin{center}
\includegraphics[width=0.88\textwidth]{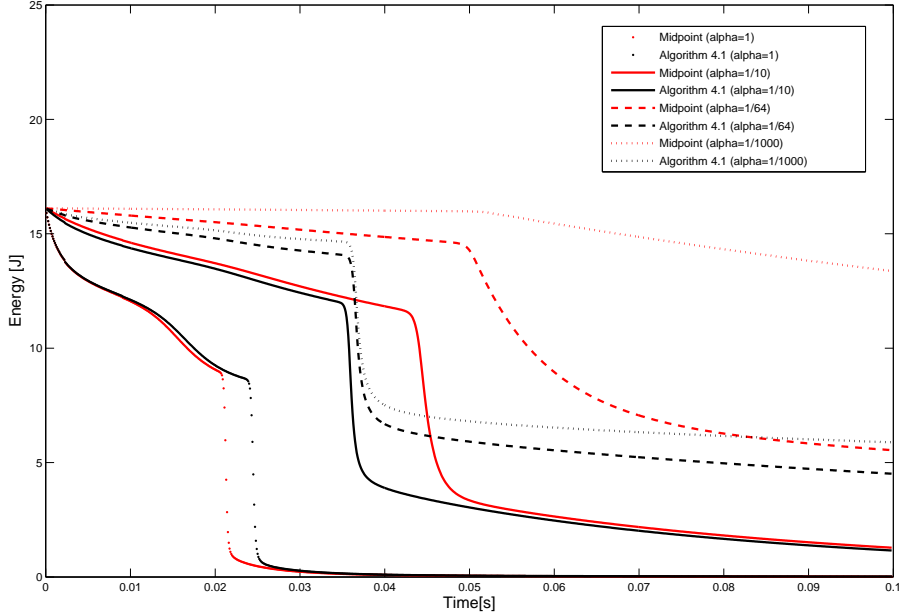}
\end{center}
\caption{Evolution of the Energy for different values of $\alpha$, with $C^e=C^m=0$ r=5, s=4, $k_a=10^-5$, $k_m=10^-5$. \label{energy} }
\end{figure}

On the other hand, the blow-up time seems to increase as $h$ becomes smaller as shown in Figure \ref{energy2}. This effect was not observable in \cite{bp} due to the fixed-point iteration and thus the coupling of 
$h$ and $k_m$. Our empirical observation raises the question of the mere existence and behaviour of the blow-up time in case of exchange only. Put explicitly, the finite time blow-up might be a numerical artifact stemming from insufficient spatial resolution. 

In Figure~\ref{energy}, we plot the discrete energy for the exchange-only case with $C^e=C^m=0$ defined by
$
 E(\textbf{m},t)=\frac{1}{2}\| \nabla\textbf{m}(t)\|_{L^2(\Omega)}^2,
$
and investigate the stability of the respective algorithms
 when $\alpha$ becomes small. The kinks in the graph coincide with the empirical blow-up times.
 We observe that both algorithms seem to be stable, while Algorithm~\ref{alg} provides a stronger energy decay for small values of $\alpha$. 
\begin{table}[h]
\begin{center}
   \begin{tabular}{| c | c | c | c | c | c | c | c | c | }
     \hline
    \multicolumn{1}{|c|}{$\displaystyle$ $1/h$}   & \multicolumn{2}{|c|}{16} & \multicolumn{2}{|c|}{32} & \multicolumn{2}{|c|}{64} & \multicolumn{2}{|c|}{96} \\
     \hline
       & CPU & $T_B$ & CPU & $T_B$  & CPU & $T_B$ & CPU & $T_B$ \\ \hline $\displaystyle$
   Alg \ref{alg}   & 12.9 & $>$ 0.015 & 66.4  & 0.0095 &  578.7 &  0.0079 & 2899 & 0.0074 \\
    Mid&  32.7 & $>$0.015  &  456.7 &  0.0089 &  10415 & 0.0071 & N.A.N & N.A.N \\
     \hline
   \end{tabular}
  \text{ } 
   \caption{ Comparison of CPU and blow-up time $T_B$ for Algorithm \ref{alg} and midpoint scheme \cite{rochat} with $\alpha=1$, T=0.015[s], $ C^e=40$ and $C^m=10.$ \label{blowup2}}
   \end{center}
\end{table}

 \begin{figure}[t]
\begin{center}
\includegraphics[width=.88\textwidth]{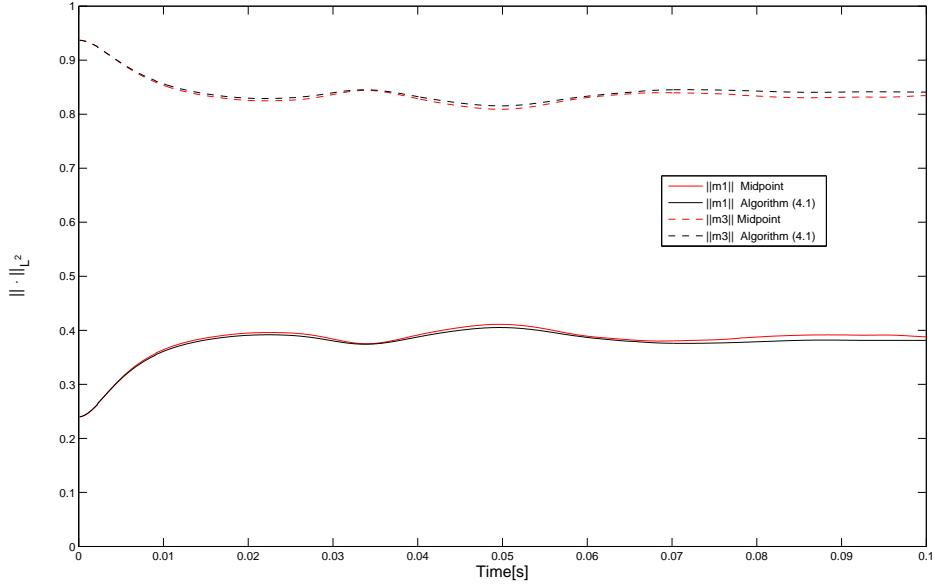}
\end{center}
\caption{Evolution of $\overline{\|m_j\|}_{L^2}, j=1, 3$ with $C^e=C^m=0$ $\alpha=1/64$, r=5, s=4, $k_a=k_m=10^-5$. \label{diff1}}
\end{figure}

\begin{figure}[h]
\begin{center}
\includegraphics[width=.88\textwidth]{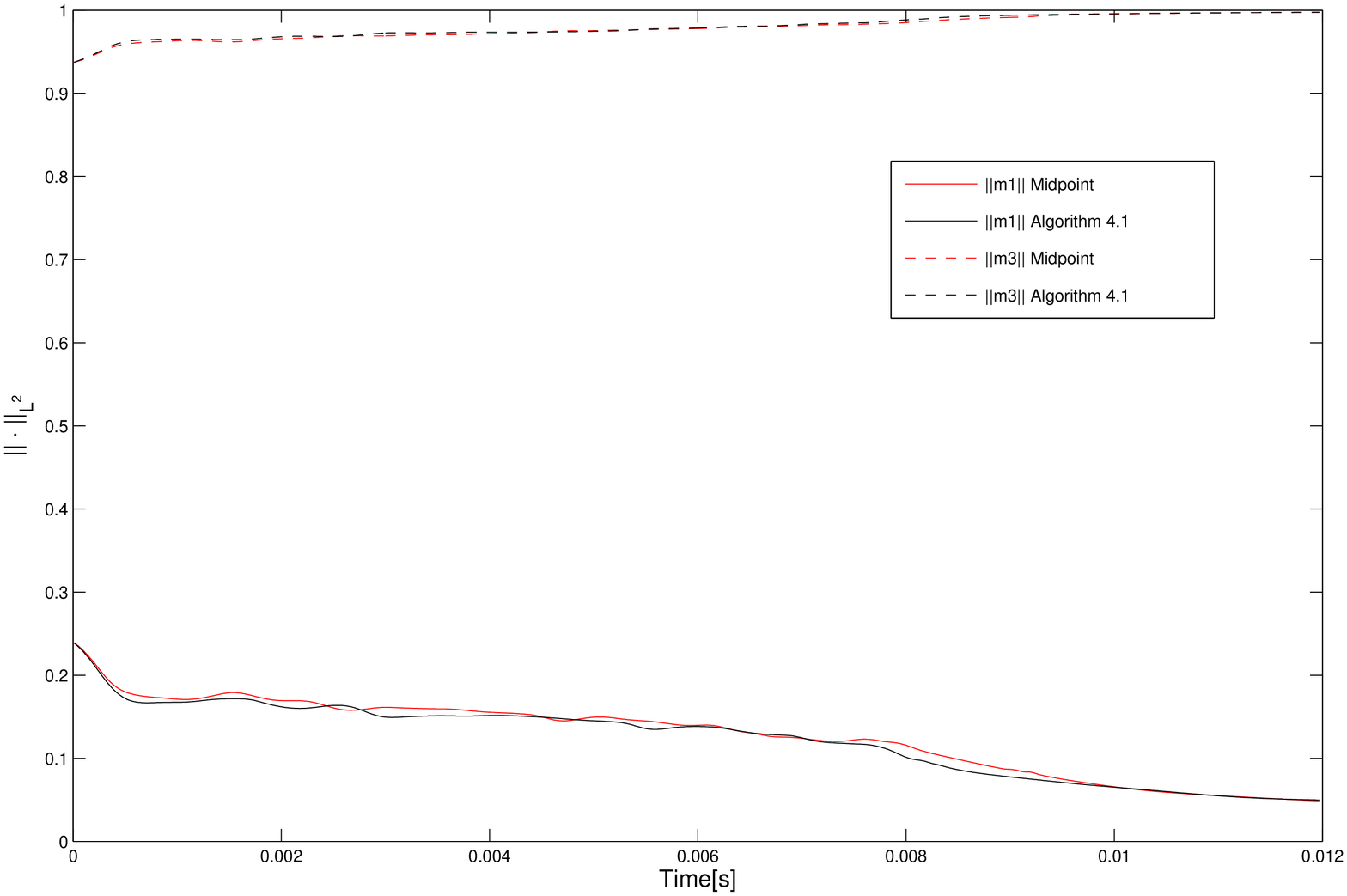}
\end{center}
\caption{Evolution of $\overline{\|m_j\|}_{L^2}, j=1, 3$ with $C^e=40,C^m=10$, $\alpha=1/4$, r=5, s=1 and $k_a=k_m=10^-6$. \label{diff2}} 
\end{figure}
In a second experiment, we include magnetostriction. In Table~\ref{blowup2}, we compare the two algorithms for $C^e = 40, C^m = 10$, and $T = 0.015[s]$ and observe similar results as for the exchange-only case for LLG.  Even with included magnetostrictive effects, the blow-up time varies a lot depending on the spatial resolution $h$, as well as between the different schemes. Finally, in the Figures~\ref{diff1} and~\ref{diff2}, we compare the computational results of the two different schemes. Due to the symmetry of the problem, we only show the evolution of the $L^2(\Omega)$-average
\[
\overline{\| m_j \|}_{L^2(\Omega)} = \frac{1}{| \Omega | } \Big( \int_{\Omega} m_j^2\Big)^{1/2} = \| m_j \|_{L^2(\Omega)}, \quad j=1, 3
\]
 for the $m_1$ and $m_3$ components of the magnetization.


%
%

Overall, we conclude that the results of our algorithm are in good agreement with the midpoint scheme, more feasible for small $\alpha$, throughout much faster to compute, and finally easier to implement. 

\def\new#1{#1}

\noindent \textbf{Acknowledgements.}
Marcus Page and Dirk Praetorius acknowledge financial support through the WWTF project MA09-029 and the FWF project P21732.

\newcommand{\bibentry}[2][!]{\ifthenelse{\equal{#1}{!}}{\bibitem{#2}}{\bibitem[#1]{#2}}}

\end{document}